\documentclass[11pt, onesided, reqno]{amsart}

\usepackage{amssymb}
\usepackage{amsmath}
\usepackage{color}
\usepackage{enumerate}
\usepackage{graphicx}

\setlength{\oddsidemargin}{-0.0in}
\setlength{\textwidth}{6.5in}
\setlength{\topmargin}{-0.4in}
\setlength{\textheight}{8.7in}
\evensidemargin\oddsidemargin

\setcounter{page}{1}

\newtheorem{theorem}{Theorem}
\newtheorem{proposition}[theorem]{Proposition}
\newtheorem{corollary}[theorem]{Corollary}
\newtheorem{lemma}[theorem]{Lemma}

\theoremstyle{definition}

\newtheorem{remark}[theorem]{Remark}

\newcommand{\eqnsection}{
\renewcommand{\theequation}{\thesection.\arabic{equation}}
    \makeatletter
    \csname  @addtoreset\endcsname{equation}{section}
    \makeatother}
\eqnsection

\def\E{\mathbb{E}}

\def\N{\mathbb{N}}

\def\R{\mathbb{R}}

\def\Pb{\mathbb{P}}

\renewcommand{\i}{\textbf{i}}

\newcommand{\equi}{\; \mathop{\sim}\limits}
\def\={{\,\;\mathop{=}\limits^{\text{(law)}}\;\,}}

\def\qed{\hfill$\square$}

\newcommand*\pFqskip{8mu}
\catcode`,\active
\newcommand*\pFq{\begingroup
        \catcode`\,\active
        \def ,{\mskip\pFqskip\relax}%
        \dopFq
}
\catcode`\,12
\def\dopFq#1#2#3#4#5{%
        {}_{#1}F_{#2}\biggl[\genfrac..{0pt}{}{#3}{#4};#5\biggr]%
        \endgroup
}

\allowdisplaybreaks

\makeatletter
\@namedef{subjclassname@2020}{%
  \textup{2020} Mathematics Subject Classification}
\makeatother

\begin{document}

\title[]{A pursuit problem for squared Bessel processes}
\author[Christophe Profeta]{Christophe Profeta}

\address{
Universit\'e Paris-Saclay, CNRS, Univ Evry, Laboratoire de Math\'ematiques et Mod\'elisation d'Evry, 91037, Evry-Courcouronnes, France.
 {\em Email}: {\tt christophe.profeta@univ-evry.fr}
  }

\keywords{Bessel process - Persistence probability - First passage time}

\subjclass[2020]{60J60 - 60G40 - 60G18}

\begin{abstract} 
In this note, we are interested in the probability that two independent squared Bessel processes do not cross for a long time. We show that this probability has a power decay which is given by the first zero of some hypergeometric function. We also compute along the way the distribution of  the location where the crossing eventually occurs.
\end{abstract}

\maketitle

\section{Introduction}
\subsection{Statement of the main result}
Let $X$ and $Y$ be two independent squared Bessel processes of respective dimensions $\alpha>0$ and $\beta\geq0$. We denote by $\Pb_{(x,y)}$ the law of the pair $(X,Y)$ when starting from $(x,y)$ and we assume that $0\leq x < y$. In this note, we are interested in the probability that the process $X$ remains below $Y$ for a long time:
$$\Pb_{(x,y)}\left(\forall s\leq t,\, X_s < Y_s\right)\qquad \text{as } t\rightarrow +\infty.$$
This question is often labelled in the literature as a capture  or a pursuit problem. In most papers, the set-up is a Brownian one and 
the leading process is called a lamb or a prisoner, while its pursuers are wolves or policemen, see for instance \cite{Ken, LiSh, ReKr} and the references therein. One may also see this question as a persistence problem for the difference of two squared Bessel processes. Indeed, a natural way to study such  a question is to introduce the stopping time 
$$T= \inf\{t\geq0,\, X_t = Y_t\}$$
and study its tail asymptotics since $\Pb_{(x,y)}\left(\forall s\leq t,\, X_s < Y_s\right) = \Pb_{(x,y)}\left(T> t\right)$. Unlike the sum of squared Bessel processes, the difference is no longer Markovian and studying such asymptotics is thus more delicate. We refer to the two surveys \cite{AuSi, BMS} for an extensive background on persistence probabilities, as well as their links with many physic phenomenons.
When dealing with self-similar processes, one generally obtains a power-law decay. However, except in very specific situations, actually computing the exponent is usually complicated. The results in this paper are no exception: we will see that the persistence exponent of $X-Y$ is given by the first zero of a certain hypergeometric function.  \\

\noindent
In order to state our results, let us define the hypergeometric function ${}_2F_1$ (see for instance \cite[Chapter 9.1]{GrRy}): 
$$ \pFq{2}{1}{a\quad b}{c}{z} = \sum_{n=0}^{+\infty}  \frac{(a)_n(b)_n}{(c)_n} \frac{z^n}{n!}
$$
where $(a)_0=1$ and $(a)_n = a(a+1)\ldots (a+n-1)=\dfrac{\Gamma(a+n)}{\Gamma(a)}$ for $n\in \N$.
We denote by $F_{\alpha,\beta}$ the hypergeometric function
$$F_{\alpha, \beta}(s) =  \pFq{2}{1}{ \frac{\alpha+\beta}{2}-1+s \quad -s}{ \frac{\alpha}{2}}{\frac{1}{2}}, \qquad s\geq0.$$
We show below a typical plot of the function $F_{\alpha,\beta}$.

\begin{figure}[h!]
\includegraphics[height=5cm]{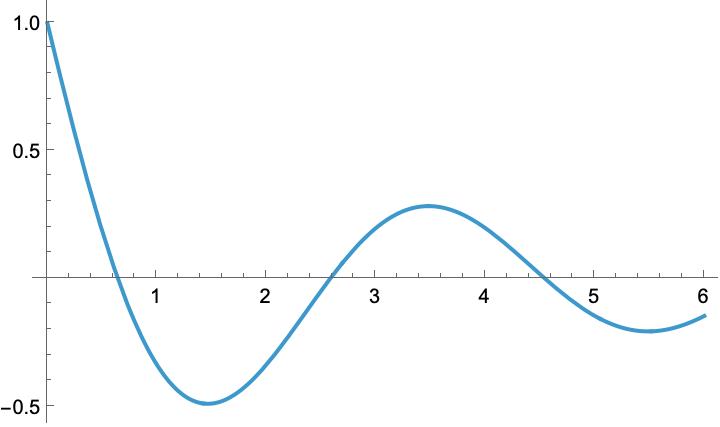}
\caption{Graph of $F_{5,3}(s)$ for $s\in[0,6]$}
\end{figure}

\noindent
Our first result gives the distribution of the position of $X$ and $Y$ when they cross each other.

\begin{proposition}\label{prop:X_T}
Let us denote by $\theta$ the first positive zero of the hypergeometric function $F_{\alpha,\beta}$: 
$$\theta = \inf\left\{s\geq0,\, F_{\alpha, \beta}(s)=0  \right\}.$$
Under $\Pb_{(0,y)}$ the stopping time $T$ is a.s. finite and the Mellin transform of $X_{T}$ is given by:
$$ \E_{(0,y)}\left[ X_{T}^{s} \right]= \frac{(2y)^s}{F_{\alpha,\beta}(s)  },\qquad s\in[0,\theta).$$
In particular, $\theta$ is equal to  $1$  when $\alpha=\beta$. Otherwise,  $\theta<1$ when $\alpha < \beta$, and  $\theta>1$ when $\alpha>\beta$.
\end{proposition}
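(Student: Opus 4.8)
The plan is to produce an explicit $\G$-harmonic function for the Markov process $(X,Y)$ — where $\G=2x\partial_{xx}+\alpha\partial_x+2y\partial_{yy}+\beta\partial_y$ — which is homogeneous of degree $s$, and then to run an optional-stopping argument at $T$; the function $F_{\alpha,\beta}$ will appear as the value at $\tfrac12$ of the ``angular part'' of this harmonic function. First I would use the additivity of squared Bessel processes, so that $R:=X+Y$ is a squared Bessel process of dimension $\alpha+\beta$ with generator $\mathcal{L}_R=2r\partial_{rr}+(\alpha+\beta)\partial_r$, together with their self-similarity of index $1$, which makes it natural to look for $\G$-harmonic functions of the form $h_s(x,y)=(x+y)^s\,\phi\!\left(\tfrac{x}{x+y}\right)$. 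A direct computation — cleanest in the variables $r$ and $v=\tfrac{x}{x+y}$, using $\mathcal{L}_R(x+y)^s=s(2s-2+\alpha+\beta)(x+y)^{s-1}$ — turns $\G h_s=0$ into the Gauss hypergeometric equation $2v(1-v)\phi''+\bigl(\alpha-(\alpha+\beta)v\bigr)\phi'+s(2s-2+\alpha+\beta)\phi=0$, whose solution analytic at $v=0$ with value $1$ there is $\phi_s(v)={}_2F_1\!\left(-s,\,s+\tfrac{\alpha+\beta}{2}-1;\,\tfrac{\alpha}{2};\,v\right)$. Thus $\phi_s(0)=1$, $\phi_s(\tfrac12)=F_{\alpha,\beta}(s)$, $h_s(X_t,Y_t)$ is a local martingale, and on $\{t<T\}$ one has $X_t/(X_t+Y_t)\in[0,\tfrac12)$.

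Fixing $s\in[0,\theta)$, I would then verify three points. (i) $T<\infty$ a.s.: under the time change $t\mapsto A_t:=\int_0^t \mathrm{d}s/R_s$ the ratio $X/(X+Y)$ becomes a Jacobi diffusion on $[0,1]$ started at $0$, with generator $2v(1-v)\partial_{vv}+(\alpha-(\alpha+\beta)v)\partial_v$; since $A_\infty=+\infty$ a.s.\ and this diffusion is regular in $(0,1)$ with $0$ an entrance or instantaneously reflecting boundary, it a.s.\ eventually exceeds $\tfrac12$, i.e.\ $X$ meets $Y$. (ii) For $s<\theta$ the function $\phi_s$ is bounded and bounded away from $0$ on $[0,\tfrac12]$, its first zero in $(0,1]$ lying strictly to the right of $\tfrac12$ (see below), so $h_s(X_{t\wedge T},Y_{t\wedge T})$ is a nonnegative local martingale comparable to $R_{t\wedge T}^{\,s}$. (iii) Uniform integrability: picking $s'\in(s,\theta)$, the nonnegative local martingale $h_{s'}(X_{t\wedge T},Y_{t\wedge T})$ is a supermartingale of bounded expectation, hence $\sup_t\E_{(0,y)}\!\bigl[R_{t\wedge T}^{\,s'}\bigr]<\infty$ and the family $\{h_s(X_{t\wedge T},Y_{t\wedge T})\}_t$ is uniformly integrable. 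Letting $t\to\infty$, optional stopping gives $\E_{(0,y)}[h_s(X_T,Y_T)]=h_s(0,y)$; since $X_T=Y_T$ the ${}_2F_1$-argument at time $T$ equals $\tfrac12$, so the left side is a constant multiple of $F_{\alpha,\beta}(s)\,\E_{(0,y)}[X_T^s]$ and the right side a constant multiple of $y^s$, and rearranging gives the stated Mellin transform. As it is finite precisely on $[0,\theta)$, this also re-proves that $T<\infty$ and $0<X_T<\infty$ a.s.\ under $\Pb_{(0,y)}$.

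It remains to locate $\theta$. I would observe that $\phi_1$ is affine, $\phi_1(v)=1-\tfrac{\alpha+\beta}{\alpha}v$, with unique zero at $v=\tfrac{\alpha}{\alpha+\beta}$, so $F_{\alpha,\beta}(1)=\phi_1(\tfrac12)=\tfrac{\alpha-\beta}{2\alpha}$. When $\alpha<\beta$ this is negative while $F_{\alpha,\beta}(0)=1>0$, so $\theta<1$ by the intermediate value theorem. For $\alpha\ge\beta$, write $v_1(s)\in(0,1]$ for the first zero of $\phi_s$; then $\theta=\inf\{s\ge0:v_1(s)\le\tfrac12\}$, and a Sturm comparison shows $s\mapsto v_1(s)$ is decreasing (the zero-order coefficient $s(2s-2+\alpha+\beta)$ increases in $s$ where it is positive, while $\phi_s$ has no zero in $(0,1)$ as long as it is $\le0$). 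Since $v_1(1)=\tfrac{\alpha}{\alpha+\beta}$ equals $\tfrac12$ when $\alpha=\beta$ and exceeds $\tfrac12$ when $\alpha>\beta$, this yields $\theta=1$ and $\theta>1$ respectively.

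The step I expect to need the most care is this Sturm-type monotonicity of $v_1(s)$, which also underlies point (ii), because the sign of $s(2s-2+\alpha+\beta)$ changes with $s$ when $\alpha+\beta<2$; the uniform integrability in (iii) is a lesser point, handled above by comparison with a slightly larger exponent. (An alternative, bypassing optional stopping and yielding the full law of $X_T$: under the time change $A$, the sum $R$ becomes a geometric Brownian motion independent of the Jacobi ratio, so $X_T=\tfrac12 R$ evaluated at the hitting time $\tau$ of $\tfrac12$ by the Jacobi diffusion is an explicit exponential functional, and $\E_{(0,y)}[X_T^s]$ reduces to a constant multiple of $\E\bigl[e^{s(2s-2+\alpha+\beta)\tau}\bigr]$ — a classical computation for one-dimensional diffusions, expressing the exponential moment of a hitting time through the eigenfunction $\phi_s$ of the generator, which identifies it with $1/F_{\alpha,\beta}(s)$.)
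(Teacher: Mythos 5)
Your route is genuinely different from the paper's: the paper obtains the Mellin transform analytically, by integrating $t^{-r}\E_{(0,y)}[(X_t-Y_t)_+^{-\nu}]$, applying the Markov property at $T$, computing both sides through the explicit Fourier transform of $X_t-Y_t$ and the Appendix integral, and then cancelling poles and letting $r\downarrow 0$; you instead exploit the skew-product structure of the pair, producing the space-time harmonic functions $h_s(x,y)=(x+y)^s\phi_s\bigl(\tfrac{x}{x+y}\bigr)$ with $\phi_s(v)={}_2F_1\bigl(-s,\,s+\tfrac{\alpha+\beta}{2}-1;\tfrac{\alpha}{2};v\bigr)$ and running optional stopping at $T$. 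Your change of variables and the resulting hypergeometric ODE are correct, the uniform-integrability scheme via a larger exponent $s'\in(s,\theta)$ works, and the Jacobi time-change argument for $\Pb_{(0,y)}(T<\infty)=1$ is sound in outline (you should justify $\int^\infty ds/R_s=\infty$, and note that when $\alpha+\beta<2$ one can simply use $T\le\tau_0^R<\infty$); likewise the positivity of $\phi_s$ on $[0,\tfrac12]$ for $s<\theta$, which your argument genuinely needs, can be obtained either by your Sturm comparison (with the care you flag) or more simply by continuity in $s$ together with the fact that a nontrivial solution cannot have a double zero in $(0,1)$. Conceptually your proof buys a transparent interpretation ($\theta$ as the exit exponent of a Jacobi diffusion from $[0,\tfrac12)$, and the $\theta\lessgtr 1$ comparisons via the affine $\phi_1$), whereas the paper's computation delivers the identity and the finiteness of $T$ in one analytic sweep.

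There is, however, a genuine gap at the final step, which you hide behind the phrase ``a constant multiple''. Carrying out your optional stopping honestly: $h_s(0,y)=y^s\phi_s(0)=y^s$, while on $\{X_T=Y_T\}$ one has $h_s(X_T,Y_T)=(2X_T)^s\phi_s(\tfrac12)=2^sF_{\alpha,\beta}(s)X_T^s$, so your identity reads $\E_{(0,y)}[X_T^s]=(y/2)^s/F_{\alpha,\beta}(s)$ — which differs from the stated formula $(2y)^s/F_{\alpha,\beta}(s)$ by a factor $4^s$. So ``rearranging gives the stated Mellin transform'' is false as written, and you never confront the mismatch. The discrepancy is not an error in your harmonic function: for $\alpha=\beta=1$ your $h_s$ reduces to $\mathrm{Re}\,(W+\i B)^{2s}$ and $T$ to the exit time of the planar wedge of half-angle $\pi/4$ (the same picture the paper uses after Corollary \ref{cor:1}), and the classical wedge computation — or a direct calculation with the Cauchy-type law of $S_{\tau_0^D}$ killed on $\{\tau_0^D<\tau_0^S\}$ — gives $\E_{(0,y)}[X_T^{s}]=(y/2)^s/\cos(\pi s/2)$, i.e.\ your constant, not the printed one. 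So either you must exhibit where your normalization loses a factor (it does not appear to), or you must conclude that the prefactor in the statement should read $(y/2)^s$; as submitted, your proposal does not prove the proposition as stated, and the failure to notice and resolve this factor is the main defect. Note that the exponent $\theta$, the trichotomy $\theta\lessgtr1$, and everything downstream (Theorem \ref{theo:1}, Corollary \ref{cor:1}) depend only on the zero of $F_{\alpha,\beta}$ and are unaffected by the prefactor.
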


It might be surprising to find that $T$ is always finite whatever the value of $\alpha>0$ and $\beta\geq0$, in particular even if $Y$ is transient (i.e. $\beta>2$). A possible interpretation is the following law of the iterated logarithm, see \cite{Shi}, which states that a squared Bessel process $Y$ of dimension $\beta>2$ satisfies: 
$$ \limsup_{t\rightarrow+\infty}  \frac{Y_t}{2 t \ln(\ln(t))} = 1\quad \text{a.s.} \quad \text{and}\quad  \limsup_{t\rightarrow+\infty}  \frac{\inf_{s\geq t}Y_s}{2 t \ln(\ln(t))} = 1\quad \text{a.s.}   $$
In particular, the limit sup of squared Bessel processes does not depend on the dimension. So in our set-up, even if $\beta$ is large, the fluctuations of the future infimum of the "upper" process $Y$ remain of the same order as those of the "lower" process $X$, which might explain why a crossing always occurs with probability one.\\

\noindent
The hypergeometric function appearing in Proposition \ref{prop:X_T} may be computed explicitly in several situations. 
\begin{enumerate}
\item For instance, when $\alpha=\beta$, we have from \cite[p.557, Formula 15.1.24]{AbSt}:
\begin{equation}\label{Fa=b}
F_{\alpha,\alpha}(s)=\pFq{2}{1}{\alpha-1+s\quad -s}{\frac{\alpha}{2}}{\frac{1}{2}}=  \sqrt{\pi} \frac{\Gamma(\frac{\alpha}{2})}{\Gamma\left(\frac{1-s}{2}\right)\Gamma\left(\frac{s+\alpha}{2}\right)},\qquad\text{and}\quad \theta =1.
\end{equation}
\item Another example is the case 
$\alpha+\beta=4$ for which we obtain from Bailey's formula \cite[p.557, Formula 15.1.26]{AbSt}:
$$F_{\alpha, 4-\alpha}(s)=  \pFq{2}{1}{ 1+s \quad -s}{ \frac{\alpha}{2}}{\frac{1}{2}} =2^{1-\frac{\alpha}{2}} \sqrt{\pi}\frac{ \Gamma\left(\frac{\alpha}{2}\right)}{ \Gamma\left(\frac{2+2s+\alpha}{4}\right) \Gamma\left(\frac{\alpha-2s}{4}\right)},\qquad\text{and}\quad \theta= \frac{\alpha}{2}. $$
\item As a last example, we assume that $\alpha=\beta+2$. From \cite[p.557, Formula 15.1.25]{AbSt}, we deduce that 
$$
F_{\beta+2, \beta}(s) = \frac{2\sqrt{\pi}}{\beta+2s} \Gamma\left(\frac{\beta}{2}+1\right)\left(\frac{1}{\Gamma\left(\frac{\beta+s}{2}\right) \Gamma\left(\frac{1-s}{2}\right)} - \frac{1}{\Gamma\left(\frac{\beta+s+1}{2}\right) \Gamma\left(-\frac{s}{2}\right)} \right).$$
In this case, $\theta$ does not seem to admit an explicit expression.
\end{enumerate}

\noindent
We now show that $\theta$ actually  controls the decay of $T$. The underlying idea, which was already successfully applied in \cite{PrSi, Pro}, is as follows. If $T$ were independent of $X$, then, by scaling, the random variables $X_T$ and  $T\times X_1$ would have the same distribution. Since $X_1$ admits moments of all orders, this would imply that the finiteness of the fractional moments of $X_T$ and $T$ should be equivalent. This heuristic result turns out to be correct, as evidenced by the following theorem.
\begin{theorem}\label{theo:1}
 For $\lambda\geq 0$ and $0\leq x<y$, we have the equivalence:
$$\E_{(x,y)}\left[T^{\lambda}\right] <+\infty \qquad  \Longleftrightarrow\qquad  \lambda < \theta.$$
In particular, when $\alpha>\beta$, it holds
$$\E_{(x,y)}\left[T\right]  = \frac{y-x}{\alpha-\beta}.$$
\end{theorem}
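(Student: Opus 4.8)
The plan is to establish the equivalence in two steps — reduce to the starting point $x=0$, where Proposition \ref{prop:X_T} applies, and then transfer the moment information from $X_T$ to $T$ using self-similarity — and to obtain the formula by an optional stopping argument.

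\textbf{Step 1: reduction to $x=0$.} The additivity property of squared Bessel processes lets us realise, for $0\le x<y$, the process $X$ under $\Pb_{(x,y)}$ as $\widetilde X+N$, where $\widetilde X$ is a BESQ$(\alpha)$ started from $0$ and $N$ an independent BESQ$(0)$ started from $x$, both independent of $Y$. Since $\widetilde X\le X$ in this coupling while $Y$ is unchanged, the crossing time of $\widetilde X$ with $Y$ dominates $T$, so that $\E_{(x,y)}[T^\lambda]\le\E_{(0,y)}[T^\lambda]$; by scaling ($T$ under $\Pb_{(0,y)}$ equals $y$ times $T$ under $\Pb_{(0,1)}$ in law) it then suffices to show $\E_{(0,1)}[T^\lambda]<\infty$ for $\lambda<\theta$. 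The same coupling shows $\Pb_{(x,y)}(T>t)$ is nonincreasing in $x$ and nondecreasing in $y$; combining this monotonicity with the strong Markov property at time $1$ — on an event of positive probability one has $T>1$, $X_1\le\delta$, $Y_1\ge1$ — and with scaling, one propagates the divergence, showing that $\E_{(0,1)}[T^\lambda]=\infty$ forces $\E_{(x,y)}[T^\lambda]=\infty$ for every $0\le x<y$; this handles the implication $\lambda\ge\theta\Rightarrow\E_{(x,y)}[T^\lambda]=\infty$.

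\textbf{Step 2: from $X_T$ to $T$ under $\Pb_{(0,1)}$.} By Proposition \ref{prop:X_T}, $s\mapsto\E_{(0,1)}[X_T^s]=2^s/F_{\alpha,\beta}(s)$ is finite on $[0,\theta)$; since $F_{\alpha,\beta}$ is continuous, positive on $[0,\theta)$ and vanishes at $\theta$, monotone convergence gives $\E_{(0,1)}[X_T^\theta]=+\infty$, hence $\E_{(0,1)}[X_T^s]<\infty\iff s<\theta$. It remains to prove $\E_{(0,1)}[T^\lambda]<\infty\iff\E_{(0,1)}[X_T^\lambda]<\infty$. The heuristic recalled before the theorem — that $X_T$ and $T\,X_1$ would be equidistributed if $T$ and $X$ were independent, with $X_1$ (equal to $2$ times a $\Gamma(\alpha/2)$ variable) having all moments — is turned into rigorous two-sided estimates along the lines of \cite{Pro,PrSi}: one bounds $X_T$ above and below by functionals of the path of $X$ over time intervals of length comparable to $T$, and uses the strong Markov property together with the scaling property to replace these by moments of $X$ at deterministic times, which are all finite. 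Controlling the dependence between $T$ and the path of $X$ well enough to pin the threshold at exactly $\theta$ on both sides is the main obstacle.

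\textbf{Step 3: the expectation when $\alpha>\beta$.} Here $\theta>1$ by Proposition \ref{prop:X_T}, so the equivalence gives $\E_{(x,y)}[T^\lambda]<\infty$ for some $\lambda\in(1,\theta)$, hence $T<\infty$ a.s. Since $\E_{(x,y)}[X_t]=x+\alpha t$ and $\E_{(x,y)}[Y_t]=y+\beta t$, the process
$$M_t:=(Y_t-X_t)+(\alpha-\beta)t$$
is a martingale with $M_0=y-x$, and $M_{t\wedge T}=(Y_{t\wedge T}-X_{t\wedge T})+(\alpha-\beta)(t\wedge T)\ge0$ because $Y\ge X$ on $[0,T]$ and $\alpha>\beta$; as $t\to\infty$ it converges a.s. to $(\alpha-\beta)T$. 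To pass $\E[M_{t\wedge T}]=y-x$ to the limit, note $\E[Y_{t\wedge T}-X_{t\wedge T}]=\E[(Y_t-X_t)\Un_{\{T>t\}}]\le\E[Y_t\Un_{\{T>t\}}]$, and this tends to $0$ by Hölder's inequality using the bound $\E[Y_t^p]=O(t^p)$ from scaling and the tail bound $\Pb_{(x,y)}(T>t)\le\E_{(x,y)}[T^\lambda]\,t^{-\lambda}$ with $\lambda>1$, on choosing $p>1$ close enough to $1$. Combined with $\E[(\alpha-\beta)(t\wedge T)]\to(\alpha-\beta)\E_{(x,y)}[T]$ by monotone convergence, this yields $(\alpha-\beta)\E_{(x,y)}[T]=y-x$.
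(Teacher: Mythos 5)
Your Step~1 is essentially the paper's Lemma~\ref{lem:Txy} (coupling plus strong Markov plus scaling), and your Step~3 is a correct route to the identity $\E_{(x,y)}[T]=(y-x)/(\alpha-\beta)$, but Step~2 is where the entire content of the theorem lies, and you have not proved it: you restate the heuristic and then explicitly label the needed estimate (``controlling the dependence between $T$ and the path of $X$'') as ``the main obstacle'' without resolving it. The easy half does follow from a known maximal inequality: DeBlassie's bound \cite{DeB}, i.e.\ (\ref{eq:BDG})--(\ref{eq:XTT}), gives $\E_{(x,y)}[X_T^\lambda]\le C_{\alpha,\lambda}\E_{(x,y)}[(T+x)^\lambda]$, hence divergence of $\E[T^\lambda]$ for $\lambda\ge\theta$; this should be cited rather than gestured at. But the converse implication, $\lambda<\theta\Rightarrow\E_{(0,y)}[T^\lambda]<\infty$, cannot be obtained by the scaling/strong-Markov replacement you sketch: $T$ is not a stopping time of $X$ alone, and the converse maximal inequality of Pedersen \cite{Ped}, i.e.\ (\ref{eq:Ped}), presupposes $\E[T^\lambda]<\infty$, so invoking it directly is circular. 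The paper needs two genuinely different constructions here: for $\alpha\le\beta$, a martingale built from $z^{\eta/2}K_\eta(\sqrt{2\lambda z})$ evaluated along $Y$ stopped at $T\wedge\tau_\varepsilon^Y$, followed by optional stopping, Laplace inversion and a Mellin computation culminating in the bound (\ref{eq:T<Y}) which compares $\E[(T\wedge\tau_\varepsilon^Y)^\lambda]$ with $\E[Y_T^\lambda]=\E[X_T^\lambda]$; for $\alpha>\beta$, a bootstrap by contradiction on $\gamma=\sup\{s:\E_{(0,y)}[T^s]<\infty\}$, using Pedersen's inequality legitimately below $\gamma$, the exponential martingale in $\beta X-\alpha Y$ with bracket $A_t=\int_0^t(\beta^2X_s+\alpha^2Y_s)ds$, an $(n+1)$-fold differentiation producing Hermite polynomials, and Lemma~\ref{lem:TAT} relating moments of $T$ and of $\sqrt{A_T}$. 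Nothing in your proposal substitutes for either construction, so the central implication remains unproved; note also that even your Step~1 lower bound, which ends at a quantity of the form $\E_{(\delta,y)}[T^\lambda]$, still requires the extra argument of Lemma~\ref{lem:Txy} (choosing $\delta$ so that the coefficient of $\E_{(0,1)}[T^\lambda]$ in (\ref{eq:xy2}) is positive) to get back to the start at $x=0$.

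Two smaller remarks on Step~3, which is otherwise a legitimate and slightly more direct alternative to the paper's truncation at $\tau_n^Y$ plus maximal inequality: your exponent choice is backwards. With $\E_{(x,y)}[Y_t^p]^{1/p}=O(t)$ and $\Pb_{(x,y)}(T>t)^{1-1/p}=O(t^{-\lambda(1-1/p)})$, you need $\lambda(1-1/p)>1$, i.e.\ $p>\lambda/(\lambda-1)$ large, not $p$ close to $1$. And, of course, this step consumes the finiteness of $\E_{(x,y)}[T^{\lambda}]$ for some $\lambda>1$, i.e.\ exactly the part of the equivalence (in the regime $\alpha>\beta$, where $\theta>1$) that your Step~2 leaves open, so as written the computation of $\E_{(x,y)}[T]$ also rests on the missing argument.
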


\begin{remark}\label{rem:Tab}
A consequence of this result is the monotonicity of $\theta$ in its arguments. Indeed,  let us emphasize the dependence in $\alpha$ and $\beta$ by writing $T=T_{\alpha,\beta}$. Using the comparison theorem for SDE \cite[Chapter IX, Theorem 3.8]{ReYo} and a coupling argument, it is immediate to see that for $\lambda>0$, the function
$(\alpha,\beta) \mapsto \E[T_{\alpha,\beta}^\lambda]$ is decreasing in $\alpha$ and increasing in $\beta$. As a consequence, the same is true for $\theta$, which is thus also non-decreasing in $\alpha$ and  non-increasing in $\beta$.
\end{remark}

\noindent
We finally give an asymptotics for the tail decay of $T$.  To this end, observe that from Lebedev \cite[Section 9.4]{Leb} the function $F_{\alpha,\beta}$ is an entire function. As a consequence,  there exists $m\in \N$ such that the following factorization holds
$$F_{\alpha,\beta}(s) = (s-\theta)^m G_{\alpha,\beta}(s)$$
where $G_{\alpha,\beta}$ is such that $G_{\alpha,\beta}(\theta)\neq0$.
\begin{corollary}\label{cor:1}
Let $\delta>0$.  There exist two positive constants $\kappa_1$ and $\kappa_2$  such that
$$\frac{\kappa_1}{t^{\theta+\delta}} \leq \Pb_{(0,y)}\left(T\geq t\right) \leq  \kappa_2\frac{(\ln(t))^m}{t^{\theta}}, \qquad \text{as }t\rightarrow +\infty.$$
\end{corollary}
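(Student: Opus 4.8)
The plan is to derive both bounds from Theorem \ref{theo:1} together with the factorization $F_{\alpha,\beta}(s) = (s-\theta)^m G_{\alpha,\beta}(s)$, using only elementary real-variable arguments rather than a full Tauberian machinery. For the lower bound, I would argue by contradiction using the finiteness criterion: if $\Pb_{(0,y)}(T\geq t) \leq \kappa_1 t^{-\theta-\delta}$ failed for all $\kappa_1>0$ along some sequence $t_n\to\infty$, this alone does not immediately help, so instead I would use the converse direction of Theorem \ref{theo:1}. Since $\E_{(0,y)}[T^\lambda]=+\infty$ for every $\lambda\geq\theta$, in particular for $\lambda=\theta$ (or a value slightly above, using monotonicity and that $\theta$ is the exact threshold), the integral $\int^\infty t^{\theta-1}\Pb_{(0,y)}(T\geq t)\,dt$ diverges; a tail bounded by $\kappa_1 t^{-\theta-\delta}$ for all large $t$ would make this integral converge. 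Hence for each fixed $\delta>0$ the bound $\Pb_{(0,y)}(T\geq t)\geq \kappa_1 t^{-\theta-\delta}$ must hold along a sequence $t_n\to\infty$; upgrading this to "for all large $t$" requires a regularity input. Here I would invoke the scaling/self-similarity structure: since $X$ is $1$-self-similar as a squared Bessel process and the event $\{T\geq t\}$ is, up to the starting configuration, monotone and "almost" scale-covariant, the function $t\mapsto \Pb_{(0,y)}(T\geq t)$ should be comparable to a regularly varying function; combined with monotonicity in $t$, a sequential lower bound becomes a genuine lower bound after absorbing a constant, which is exactly why the arbitrarily small $\delta$ appears in the statement.

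For the upper bound, the strategy is a direct Markov-type estimate sharpened by the known behaviour of the Mellin transform near $s=\theta$. By Proposition \ref{prop:X_T} and the heuristic made rigorous in Theorem \ref{theo:1}, one has good control on $\E_{(0,y)}[T^{\lambda}]$ as $\lambda\uparrow\theta$: from the factorization $F_{\alpha,\beta}(s)=(s-\theta)^m G_{\alpha,\beta}(s)$ with $G_{\alpha,\beta}(\theta)\neq 0$, the moment $\E_{(0,y)}[T^\lambda]$ blows up like $(\theta-\lambda)^{-m}$ (times a constant) as $\lambda\uparrow\theta$ — this is where the multiplicity $m$ enters. Then Markov's inequality gives, for every $\lambda<\theta$,
$$\Pb_{(0,y)}(T\geq t) \leq t^{-\lambda}\,\E_{(0,y)}[T^\lambda] \leq \frac{C}{(\theta-\lambda)^m}\,t^{-\lambda}.$$
Optimizing over $\lambda<\theta$: write $\lambda = \theta - 1/\ln t$, so that $t^{-\lambda} = t^{-\theta} e$ and $(\theta-\lambda)^{-m} = (\ln t)^m$, yielding
$$\Pb_{(0,y)}(T\geq t) \leq \kappa_2\,\frac{(\ln t)^m}{t^{\theta}}$$
for all $t$ large enough. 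This is the standard trick for passing from a moment bound with a pole of order $m$ at the critical exponent to a tail bound with a logarithmic correction, and it explains precisely the shape of the right-hand side in the corollary.

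The main obstacle I anticipate is making the moment asymptotics $\E_{(0,y)}[T^\lambda]\asymp (\theta-\lambda)^{-m}$ fully rigorous, including the case $x\geq 0$ rather than just $x=0$. For $x=0$ this should follow by combining Proposition \ref{prop:X_T} with the scaling heuristic of Theorem \ref{theo:1}: roughly, $\E_{(0,y)}[T^\lambda]$ is comparable to $\E_{(0,y)}[X_T^\lambda]/\E[X_1^\lambda] = (2y)^\lambda / (F_{\alpha,\beta}(\lambda)\,\E[X_1^\lambda])$, and near $\lambda=\theta$ the denominator vanishes to order exactly $m$ by the factorization while $\E[X_1^\lambda]$ stays bounded away from $0$ and $\infty$. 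One must check that the comparison constant in Theorem \ref{theo:1} does not itself degenerate as $\lambda\uparrow\theta$, which presumably is verified in the proof of that theorem; if so, the upper bound in the corollary is immediate. The lower bound's "for all $t$" strengthening — as opposed to "along a sequence" — is the other delicate point, and it is precisely to sidestep this that the statement only claims the weaker bound with the extra $\delta$; a clean way to get it is to note that divergence of $\int^\infty t^{\theta+\delta-1}\Pb(T\geq t)\,dt$ would follow from $\E[T^{\theta+\delta}]=\infty$ were $\delta$ chosen so that $\theta+\delta$ is still... no — rather, one uses that for any $\varepsilon>0$ the bound $\Pb(T\geq t)=o(t^{-\theta-\varepsilon})$ is incompatible with $\E[T^{\theta+\varepsilon/2}]=\infty$, hence $\limsup_{t\to\infty} t^{\theta+\delta}\Pb(T\geq t)>0$, and monotonicity of the tail then yields the stated one-sided bound with a suitable $\kappa_1$.
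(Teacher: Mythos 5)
Your upper bound follows the paper's own route in outline (Markov's inequality at $\lambda=\theta-1/\ln t$, with a moment blow-up of order $(\theta-\lambda)^{-m}$ producing the $(\ln t)^m$), but the step you yourself flag as the obstacle, namely $\E_{(0,y)}[T^{\lambda}]\leq C(\theta-\lambda)^{-m}$ with $C$ uniform as $\lambda\uparrow\theta$, is precisely the nontrivial content, and the comparison $\E_{(0,y)}[T^{\lambda}]\approx \E_{(0,y)}[X_T^{\lambda}]/\E[X_1^{\lambda}]$ you invoke is only the non-rigorous independence heuristic from the introduction. The paper closes this gap with the maximal inequalities (\ref{eq:BDG}) and (\ref{eq:Ped}): combining them gives $\E_{(0,y)}[T^{\lambda}]\leq K\,\E_{(0,y)}[X_T^{\lambda}]$ with $K$ bounded for $\lambda$ in a neighbourhood of $\theta$ (Pedersen's inequality being applicable because Theorem \ref{theo:1} already guarantees $\E_{(0,y)}[T^{\lambda}]<+\infty$ for $\lambda<\theta$), and then the exact formula (\ref{eq:XTm}) for the Mellin transform of $X_T$ supplies the $(\ln t)^m$. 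So the upper half of your proposal is fixable, but as written it is conditional on an estimate you do not prove.

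The lower bound, however, has a genuine gap: the inputs you use cannot yield it. From Theorem \ref{theo:1} you correctly get that for every $\kappa_1$ there are arbitrarily large $t$ with $\Pb_{(0,y)}(T\geq t)>\kappa_1 t^{-\theta-\delta}$, i.e. a $\limsup$ statement; but monotonicity of the tail plus the moment dichotomy does not upgrade this to all large $t$, and the appeal to self-similarity does not help, since scaling only gives $\Pb_{(0,y)}(T\geq ct)=\Pb_{(0,y/c)}(T\geq t)$, i.e. it changes the starting point and provides no regular variation in $t$ at fixed $y$. Concretely, a nonincreasing staircase tail $Q(t)=t_n^{-(\theta+2\delta)}$ for $t\in[t_n,t_{n+1})$ with $t_{n+1}=t_n^{1+2\delta/\theta}$ satisfies $\int^{\infty} t^{\lambda-1}Q(t)\,dt<+\infty$ exactly for $\lambda<\theta$, yet $t_n^{\theta+\delta}Q(t_n)=t_n^{-\delta}\rightarrow 0$, so the bound you want fails along $(t_n)$; your cited ingredients cannot exclude this behaviour for $T$. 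The paper's proof uses genuinely more information: from the explicit Mellin transform of $X_T$ (Proposition \ref{prop:X_T} and the factorization of $F_{\alpha,\beta}$) it deduces, via Karamata's Tauberian theorem and a slicing argument over $[z,z^{1+\delta}]$, a pointwise lower bound for $\Pb_{(0,y)}(X_T>t)$ valid for all large $t$ (Lemma \ref{lem:AsympXT}), and then transfers it to $T$ through $\Pb_{(0,y)}(X_T>t)\leq \Pb_{(0,y)}(T>t^{1-\varepsilon})+\Pb_{(0,y)}\bigl(\sup_{[0,t^{1-\varepsilon}]}X_s>t\bigr)$, the last term being exponentially small by scaling since $\sup_{[0,1]}X_s$ has exponential moments. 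Some such quantitative input beyond Theorem \ref{theo:1} is unavoidable, so this part of your argument needs to be replaced rather than merely tightened.
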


When $\alpha=\beta=n\in \N$, it is well-known that there exist two independent $n$-dimensional Brownian motions $B$ and $W$ such that $\| B\|_2^2 = X$ and  $\| W\|_2^2 = Y$
where $\|\centerdot\|_2$ denotes the standard Euclidian norm. Therefore, Theorem  \ref{theo:1} and Corollary \ref{cor:1} state that the probability that $B$ remains closer to the origin that $W$  up to time $t$ decays essentially as a power $-1$, independently of the dimension. In dimension 1, this is an immediate consequence of the fact that  $S =(W+B)/\sqrt{2}$ and $D=(W-B)/\sqrt{2}$ are independent Brownian motions. Indeed, if we denote by  $\tau_a^Z= \inf\{t\geq 0, \; Z_t=a\}$ the first hitting time of  $a\in \R$ by a stochastic process $Z$, then
\begin{align*}
T &=   \inf\{t\geq 0, B_t^2 = W_t^2\}\\
    &=  \inf\{t\geq 0, (W_t+B_t)(W_t- B_t) =0\}  =   \inf\{t\geq 0, S_t\, D_t=0\}  = \tau_0^S \wedge \tau_0^D
    \end{align*}
hence, since $S_0=D_0=\sqrt{y/2}$ under $\Pb_{(0,y)}$,
$$ \Pb_{(0,y)}(T\geq t) =  \Pb_{(0,y)}(\tau_0^S \geq t, \tau_0^D \geq t)  = \left(\Pb_{(0,y)}(\tau_0^S \geq t)\right)^2 \equi_{t\rightarrow +\infty} 
\left(\sqrt{\frac{y}{2}} \sqrt{\frac{2}{\pi t}}\right)^2 = \frac{y}{\pi t}. $$

\begin{remark}
To complement Corollary \ref{cor:1}, we note that the short-time asymptotics of $T$ decays (at least) exponentially. Indeed, since the paths of $X$ and $Y$ are continuous, we have: 
$$\Pb_{(0,y)}(T\leq t) = \Pb_{(0,y)}\left(T\leq t, \tau_{\inf_{[0,t]} Y_s}^{X} \leq t \right) \leq\Pb_{(0,y)}\left( \tau_{\inf_{[0,t]} Y_s}^{X} \leq t\right).     $$
We then decompose this probability into two terms: 
\begin{multline*}
\Pb_{(0,y)}\left(  \tau_{\inf_{[0,t]} Y_s}^{X} \leq t,  \;  \inf_{[0,t]} Y_s > \frac{y}{2}\right) + \Pb_{(0,y)}\left( \tau_{\inf_{[0,t]} Y_s}^{X} \leq t,  \;  \inf_{[0,t]} Y_s \leq \frac{y}{2}\right) \\
\leq \Pb_{(0,y)}\left( \tau_{y/2}^{X} \leq t\right) + \Pb_{(0,y)}\left( \tau^Y_{y/2}  \leq t\right).
\end{multline*}
The result now follows from the fact that the short-time asymptotics of the hitting times of squared Bessel processes decay exponentially. Specifically, if $Z$  is a squared Bessel process with index $\nu$ started from $z\geq 0$,  the Laplace transform of $\tau_a^Z$ is given by
$$\E_{z}\left[e^{-\lambda \tau_a^{Z}}\right] = \begin{cases}
\dfrac{z^{-\nu/2} I_\nu (\sqrt{2\lambda z })}{a^{-\nu/2} I_\nu (\sqrt{2\lambda a})} & \qquad \text{if } z\leq a,\\
\dfrac{z^{-\nu/2} K_\nu (\sqrt{2\lambda z })}{a^{-\nu/2} K_\nu (\sqrt{2\lambda a})} & \qquad \text{if } z\geq a
\end{cases}$$
where $I_\nu$ and $K_\nu$ denote the usual modified Bessel functions. Using an integration by parts and letting $\lambda\rightarrow+\infty$, we deduce that
$$\int_0^{+\infty} e^{-\lambda t} \Pb_z\left(\tau_a^Z\leq t\right) dt \equi_{\lambda\rightarrow+\infty} \frac{z^{-\nu/2}}{a^{-\nu/2}}  \frac{1}{\lambda}e^{-\sqrt{2\lambda}|\sqrt{z}-\sqrt{a}| }. $$
Finally, applying the Tauberian theorem of exponential type (see \cite[Theorem 4.2.9]{BGT}), we conclude that
$$\lim_{t\downarrow0} t \ln\left(\Pb_z\left(\tau_a^Z\leq t\right)\right) = - \frac{(\sqrt{z}-\sqrt{a})^2}{2}.$$
\end{remark}

\subsection{The values of $\theta$}
When $\alpha=\beta$, we have already checked that $\theta=1$. We now prove that $\theta$ is smaller or greater than 1, according as whether $\alpha$ is  smaller or greater than $\beta$. 
\begin{enumerate}
\item Assume that $\alpha<\beta$. Then, taking $s=1$,  the hypergeometric function simplifies to 
$$F_{\alpha,\beta}(1)= \pFq{2}{1}{\frac{\alpha+\beta}{2}\quad -1}{\frac{\alpha}{2}}{\frac{1}{2}} = 1 -   \frac{1}{\alpha}\left(\frac{\alpha+\beta}{2}\right)<0$$
hence, by continuity, since $F_{\alpha,\beta}(0)=1$, its first zero $\theta$ lies in $(0,1)$.
\item Assume now that $\alpha>\beta$ and take $s\in(0,1)$. We have by definition, with $\gamma = \frac{\alpha+\beta}{2}-1$:
$$F_{\alpha,\beta}(s) =1-\frac{s(\gamma+s)}{\alpha}  \left(1+  \sum_{n=1}^{+\infty} \frac{(1-s)_n(\gamma+s+1)_n  }{(\alpha/2+1)_n}  \frac{1}{(n+1)!} \left(\frac{1}{2}\right)^n\right).$$
Now, on the one hand, if $\gamma + s\in(-1,0)$, then all the terms are positive and $F_{\alpha,\beta}(s)>0$. On the other hand, if  $\gamma+s\geq0$, by letting $\beta$ increase up to $\alpha$, we deduce that 
$$F_{\alpha,\beta}(s)\geq F_{\alpha,\alpha}(s)>0,$$
which is strictly positive from (\ref{Fa=b}) since $s\in(0,1)$. As a consequence, $\theta$ is strictly greater than one since $F_{\alpha,\beta}(1)>0$.\\ 
\end{enumerate}

We provide below a numerical simulation\footnote{The graph has been obtained using the \emph{FindRoot} and \emph{Hypergeometric2F1} functions in Mathematica.} of $(\alpha,\beta)\rightarrow \theta(\alpha, \beta)$ in which one can observe the monotonicity of $\theta$ in its arguments. Another interesting feature of this graph is that there appear to be different regimes as  $\alpha\downarrow0$, depending on whether $\beta>2$ or $\beta<2$.  We will discuss these limits in Section \ref{sec:limalpha}.

\begin{figure}[h!]\label{fig:theta}
\includegraphics[height=8cm]{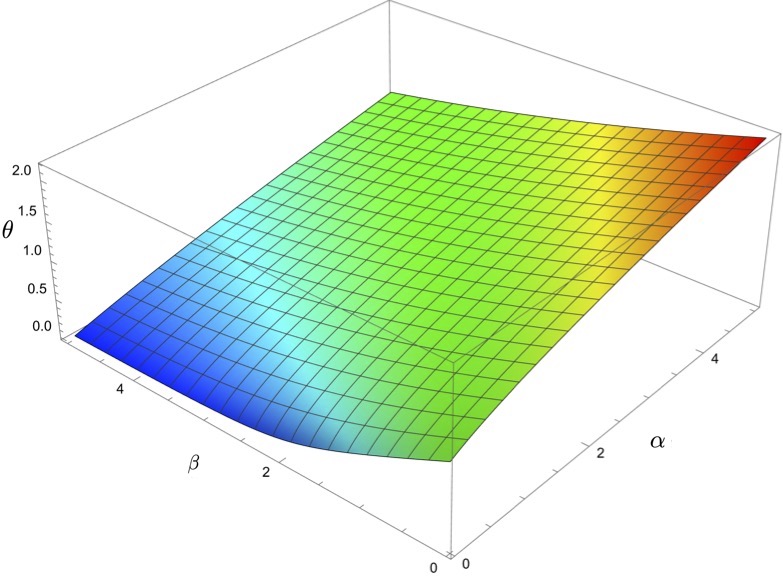}
\caption{Simulation of $\theta(\alpha, \beta)$ for $(\alpha, \beta) \in (0,5)^2$ }
\end{figure}

The remainder of the paper is organized as follows.  In Section \ref{sec:2}, we compute the Mellin transform of $X_T$ under $\Pb_{(0,y)}$.  In Section \ref{sec:3}, we show that the finiteness of the fractional moments of $T$ does not depend on the starting points $(x,y)$, allowing us to restrict our attention to the case $x=0$. The proof of Theorem \ref{theo:1} is then given in Section \ref{sec:4}, while that of Corollary \ref{cor:1} is provided in Section \ref{sec:5}. Finally, Section \ref{sec:limalpha} studies the limit of $\theta(\alpha,\beta)$ as $\alpha\downarrow0$.

\section{Computation of the Mellin transform of $X_T$}\label{sec:2}
We start by proving Proposition \ref{prop:X_T}. Recall that under $\Pb_{(x, y)}$ the pair $(X, Y)$ is a solution of the SDE, see \cite[Chapter XI]{ReYo}:
\begin{equation}\label{eq:SDE}
X_t = x +2 \int_0^{t} \sqrt{X_s} dB_s + \alpha t\qquad \text{and }\qquad Y_t = y +2 \int_0^{t} \sqrt{Y_s} dW_s + \beta t,\quad \qquad t\geq0,
\end{equation}
where $B$ and $W$ are two independent Brownian motions.  
In the following, to simplify the notation, we shall write for a random variable $Z$  and for $\gamma \in \R$:
$$\E[Z_+^\gamma]= \E[Z^\gamma 1_{\{Z>0\}}].$$
Take $r,\nu\in(0,1)$ such that $r+\nu>1$ and $\nu < \frac{\alpha+\beta}{2}$. 
Applying the Markov property and the scaling property of squared Bessel processes, we have 
\begin{align}
\notag \int_0^{+\infty} t^{-r} \E_{(0,y)}\left[(X_t - Y_t)_+^{-\nu}\right] dt &= \int_0^{+\infty} \E_{(0,y)}\left[(u+T)^{-r} \E_{(X_T, X_T)}\left[  \left(X_u - Y_u\right)_+^{-\nu}\right]1_{\{T<+\infty\}} \right] du\\
\label{eq:markov}&= \int_0^{+\infty} \E_{(0,y)}\left[ X_T^{1-\nu}    (sX_T+T)^{-r}1_{\{T<+\infty\}} \right]\E_{(1,1)}\left[  \left(X_s - Y_s\right)_+^{-\nu} \right] ds.
\end{align}
Note that we cannot just take $r=0$, as both sides would be infinite. Indeed, in this case, for the left-hand side to be finite, the integral in $t$ requires that $\nu>1$ while the expectation requires that $\nu<1$.  We shall thus evaluate semi-explicitly both sides, show that the poles at $r+\nu=1$ cancel, and then use analytic continuation.
To compute the positive parts in (\ref{eq:markov}), recall the formula for any positive r.v. $Z$ which admits some negative moments, see \cite[Lemma 1]{PrSi}:
$$
\E\left[ Z_+^{-\nu}\right]=\frac{\Gamma(1-\nu)}{\pi}\int_0^{+\infty}  \lambda^{\nu-1}\E\left[\sin\left(\lambda Z+ \frac{\pi}{2}\nu\right)\right]d\lambda 
.$$
From \cite[Chapter XI, p.441]{ReYo} and analytic continuation, the Fourier transform of $X-Y$ is given,  since $X$ and $Y$ are independent, by 
$$\E_{(x,y)}\left[e^{\i \lambda (X_t - Y_t)}\right] =(1-2\i \lambda t)^{-\alpha/2}(1+2\i \lambda t)^{-\beta/2}  \exp\left(\frac{\i \lambda (x-y) -2\lambda^2 t (x+y)}{1+4\lambda^2 t^2}\right)$$
hence, after a change of variable
\begin{multline}\label{eq:X+}
\frac{\pi}{\Gamma(1-\nu)}\E_{(x,y)}\left[ \left(X_t - Y_t\right)_+^{-\nu}\right] \\ =     t^{-\nu} \Im\left( e^{\i \frac{\pi}{2}\nu}\int_0^{+\infty}  \xi^{\nu-1} (1-2\i \xi)^{-\alpha/2}(1+2\i \xi)^{-\beta/2}  \exp\left(\frac{\i \xi (x-y) -2\xi^2 (x+y)}{ t(1+4\xi^2)}\right)  d\xi \right) $$
\end{multline}
where $\Im$ denotes the imaginary part.
On the one hand, taking $x=0$ in (\ref{eq:X+}) and plugging this expression in (\ref{eq:markov}), we deduce thanks to Fubini's theorem that the left-hand side of (\ref{eq:markov}) equals 
$$\frac{\Gamma(1-\nu)\Gamma(r+\nu-1)}{\pi y^{r+\nu-1}}  \Im\left(    e^{\i \frac{\pi}{2}(1-r)} \int_0^{+\infty}  \xi^{-r} (1-2\i \xi)^{- \frac{\alpha}{2}}(1+2\i \xi)^{ r+\nu-1-\frac{\beta}{2}} \right)d\xi.$$
Similarly, plugging (\ref{eq:X+}) with $x=y=1$ in the right-hand side of (\ref{eq:markov}) and applying Fubini's theorem, we obtain
$$\frac{\Gamma(1-\nu)\Gamma(r+\nu-1)}{\pi }    \Im\left(e^{\i \frac{\pi}{2}\nu} \int_0^{+\infty}  \xi^{\nu-1} (1-2\i \xi)^{-\alpha/2}(1+2\i \xi)^{-\beta/2}  G_{r,\nu}(\xi) d\xi \right)$$
where $G_{r,\nu}$ is defined by 
$$G_{r,\nu}(\xi) := \frac{1}{\Gamma(r+\nu-1)}  \E_{(0,y)}\left[ X_T^{1-\nu} \int_0^{+\infty} (uX_T+T)^{-r} u^{-\nu}  e^{-\frac{4 \xi^2}{u(1+4\xi^2)}} du\,1_{\{T<+\infty\}}\right].$$
Setting  $\displaystyle \eta(\xi) = \frac{4 \xi^2}{1+4\xi^2}$,  using the change of variables $u=1/s$ and an integration by parts, the integral reads: 
 \begin{align*}
 \int_0^{+\infty} (uX_T+T)^{-r} u^{-\nu}  e^{-\eta(\xi)/u} du&=   \int_0^{+\infty} (X_T+Ts)^{-r} s^{\nu+r-2}  e^{-s \eta(\xi) }ds \\
 &= \frac{1}{r+\nu-1} \int_0^{+\infty} (X_T+Ts)^{-r} s^{\nu+r-1}  e^{-s \eta(\xi) } \left(r\frac{T}{X_T+Ts}+\eta(\xi) \right)  ds. 
\end{align*}
Note that in the integration by parts, the boundary parts cancel since $\nu+r>1$. As a consequence, using the functional equation of the Gamma function, $G_{r,\nu}$ equals 
$$
G_{r,\nu}(\xi) = \frac{1}{\Gamma(\nu+r)}\E_{(0,y)}\left[X_T^{1-\nu} \int_0^{+\infty} s^{\nu+r-1} (X_T+T s)^{-r} e^{-\eta(\xi) s}\left(r\frac{T}{X_T+Ts}+\eta(\xi) \right) ds\,1_{\{T<+\infty\}}\right].
$$
Simplifying the Gamma terms, Equation (\ref{eq:markov})  now reads 
\begin{multline}\label{eq:I=I}
\frac{1}{y^{r+\nu-1}}\Im\left(    e^{\i \frac{\pi}{2}(1-r)}\int_0^{+\infty}\xi^{-r}(1-2\i \xi)^{-\frac{\alpha}{2}}(1+2\i \xi)^{r+\nu-1-\frac{\beta}{2}} d\xi \right) \\=  \Im\left(e^{\i \frac{\pi}{2}\nu} \int_0^{+\infty}  \xi^{\nu-1} (1-2\i \xi)^{-\frac{\alpha}{2}}(1+2\i \xi)^{-\frac{\beta}{2}}  G_{r,\nu}(\xi) d\xi \right)
\end{multline}
and this formula extends by analyticity to $r+\nu>0$.
By monotone convergence, we then deduce that
$$G_{r,\nu}(\xi) \xrightarrow[r\downarrow0]{} \frac{1}{\Gamma(\nu)}\E_{(0,y)}\left[X_T^{1-\nu} \int_0^{+\infty} s^{\nu-1} e^{-\eta(\xi) s} \eta(\xi)ds \, 1_{\{T<+\infty\}}\right]=\E_{(0,y)}\left[X_T^{1-\nu}\,1_{\{T<+\infty\}}\right]  \left(\eta(\xi)  \right)^{1-\nu}.  $$
As a consequence, letting $r\downarrow 0$ in (\ref{eq:I=I}), we obtain the formula
\begin{multline}\label{eq:gauchedroite}
\frac{1}{y^{\nu-1}}\Im\left(  \i \int_0^{+\infty}(1-2\i \xi)^{-\frac{\alpha}{2}}(1+2\i \xi)^{\nu-1-\frac{\beta}{2}} d\xi \right) \\
= \E_{(0,y)}\left[X_T^{1-\nu}1_{\{T<+\infty\}} \right]  \Im\left( e^{\i \frac{\pi}{2}\nu}  \int_0^{+\infty}  \xi^{1-\nu} (1-2\i \xi)^{\nu-1-\alpha/2}(1+2\i \xi)^{\nu-1-\beta/2} d\xi\right)
\end{multline}
and it remains to compute both integrals. Let us start with the left-hand side. Taking the imaginary part in Lemma \ref{lem:app} from the Appendix, we have
\begin{multline*}
\Im\left(\i\int_0^{+\infty} \xi^{\gamma-1} (1-2\i \xi)^{-\lambda} (1+2\i\xi )^{-\mu} d\xi\right)\\
=\frac{\Gamma(\gamma) \Gamma(\mu+\lambda-\gamma)}{\Gamma(\lambda)\Gamma(\mu)} 2^{-\gamma}  \left( \cos\left(\frac{\pi}{2}\gamma\right)  B\left(\lambda, 1-\gamma  \right)2^{-\lambda} \pFq{2}{1}{1-\mu \quad\lambda}{\lambda -\gamma + 1 }{\frac{1}{2}} \right.\\
\left.+\cos\left(\frac{\pi}{2}\gamma\right) B\left( \mu, 1-\gamma \right)  2^{-\mu} \pFq{2}{1}{1-\lambda \quad\mu}{\mu -\gamma + 1 }{\frac{1}{2}}\right)
\end{multline*}
where $B(x,y)$ denotes the usual Beta function. Letting the parameter $\gamma\uparrow 1$ and using the asymptotics $B(x, y) \equi_{y\rightarrow 0} \frac{1}{y}$ together with the identity $\displaystyle \pFq{2}{1}{a \quad b}{b }{\frac{1}{2}}=2^{-a}$, we thus deduce that
\begin{equation}\label{eq:gauche}
\Im\left(\i\int_0^{+\infty}  (1-2\i \xi)^{-\frac{\alpha}{2}}(1+2\i \xi)^{\nu-1-\frac{\beta}{2}} d\xi\right) =\frac{ \pi \Gamma\left(\frac{\alpha+\beta}{2}-\nu\right)}{\Gamma(\frac{\alpha}{2})\Gamma(\frac{\beta}{2}+1-\nu)} 2^{\nu - \frac{\alpha+\beta}{2}-1}.
\end{equation}
On the other side,  multiplying Lemma \ref{lem:app} in the Appendix by $e^{\i \frac{\pi}{2}\nu}$ and taking the imaginary part, we observe that only the first term will remain, i.e.
\begin{multline}\label{eq:droite}
\Im\left( e^{\i \frac{\pi}{2}\nu}  \int_0^{+\infty}  \xi^{1-\nu} (1-2\i \xi)^{\nu-1-\alpha/2}(1+2\i \xi)^{\nu-1-\beta/2} d\xi\right)
\\=- \frac{\Gamma(2-\nu)\Gamma\left(\frac{\alpha+\beta}{2}-\nu\right)}{\Gamma\left(\frac{\beta}{2}+1-\nu\right)}
2^{\nu-2}\sin(\pi \nu) \frac{\Gamma(\nu-1)}{\Gamma\left(\frac{\alpha}{2}\right)} 2^{\nu-1-\frac{\alpha}{2}}  \pFq{2}{1}{\nu-\frac{\beta}{2}\quad \frac{\alpha}{2}+1-\nu}{ \frac{\alpha}{2}}{\frac{1}{2}}.
\end{multline}
Plugging (\ref{eq:gauche}) and (\ref{eq:droite}) in (\ref{eq:gauchedroite}) and using the reflection formula for the Gamma function, i.e. $ \Gamma(2-\nu) \sin(\pi \nu)\Gamma(\nu-1)=-\pi$ for $\nu\in(0,1)$, we finally conclude that  
\begin{equation}\label{eq:Xnu}
 \E_{(0,y)}\left[ X_T^{1-\nu} 1_{\{T<+\infty\}}\right]=\left(2 y\right)^{1-\nu}  \frac{2^{ 1- \frac{\beta}{2}}}{ \pFq{2}{1}{\nu-\frac{\beta}{2}\quad \frac{\alpha}{2}+1-\nu}{ \frac{\alpha}{2}}{\frac{1}{2}}}
 \end{equation}
and the result follows from Euler's transformation 
$$ \pFq{2}{1}{\nu-\frac{\beta}{2}\quad \frac{\alpha}{2}+1-\nu}{ \frac{\alpha}{2}}{\frac{1}{2}}= 2^{1-\frac{\beta}{2}}  \pFq{2}{1}{\frac{\alpha+\beta}{2}-\nu\quad\nu-1}{ \frac{\alpha}{2}}{\frac{1}{2}} = 2^{1-\frac{\beta}{2}}F_{\alpha,\beta}(1-\nu).  $$
In particular, letting $\nu\uparrow1$ in (\ref{eq:Xnu}), we deduce that  $\Pb_{(0,y)}(T<+\infty) = 1$, i.e., that $T$ is a.s. finite. \qed

\section{Some first estimates}\label{sec:3}

\subsection{From $\Pb_{(x,y)}$ to $\Pb_{(0,1)}$}
Before tackling the proof of Theorem \ref{theo:1}, we gather here some preliminary computations.
We start by proving that the finiteness of  $\E_{(x,y)}[T^\lambda]$ is equivalent to that of $\E_{(0,1)}[T^\lambda]$. Working with $x=0$ will be key, as we aim to compare the fractional moments of $T$ with those of $X_T$  given in Proposition \ref{prop:X_T}.
\begin{lemma}\label{lem:Txy}
Let $0\leq x<y$ be fixed. There exist two positive constants $\kappa_1, \kappa_2$ such that for $0\leq \lambda \leq \theta$,
$$ \kappa_1 \E_{(0,1)}\left[T^\lambda\right] -\kappa_2 \leq \E_{(x,y)}\left[T^\lambda\right] \leq  y^{\lambda}\E_{(0,1)}\left[T^\lambda\right]. $$
\end{lemma}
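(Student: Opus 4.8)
The plan is to prove the two inequalities separately, using scaling for the upper bound and a coupling/strong Markov argument for the lower bound.

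\textbf{Upper bound.} First I would exploit the scaling property of squared Bessel processes: if $(X,Y)$ starts from $(x,y)$, then $(c^{-1}X_{ct}, c^{-1}Y_{ct})$ is again a pair of independent squared Bessel processes of the same dimensions started from $(x/c, y/c)$, and the corresponding crossing time scales as $T_{(x,y)} \overset{(law)}{=} c\,T_{(x/c,y/c)}$. Taking $c=y$ reduces $\Pb_{(x,y)}$ to $\Pb_{(x/y,1)}$ with $x/y \in [0,1)$. Then I would use the comparison theorem for SDEs (as in Remark \ref{rem:Tab}): since $X$ starts lower when we decrease its initial value, there is a coupling under which the crossing time only increases as $x \downarrow 0$. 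More precisely, running $X$ from $x/y$ and from $0$ on the same Brownian motion $B$ (and $Y$ from $1$ on the same $W$), pathwise comparison gives $X^{(0)}_s \le X^{(x/y)}_s$ for all $s$, hence $\{X^{(x/y)}_s < Y_s \text{ for } s \le t\} \subseteq \{X^{(0)}_s < Y_s \text{ for } s \le t\}$, so $T_{(x/y,1)} \le T_{(0,1)}$ in this coupling. Therefore $\E_{(x,y)}[T^\lambda] = y^\lambda \E_{(x/y,1)}[T^\lambda] \le y^\lambda \E_{(0,1)}[T^\lambda]$, which is the right-hand inequality.

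\textbf{Lower bound.} The harder direction is to bound $\E_{(x,y)}[T^\lambda]$ from below by (a constant times) $\E_{(0,1)}[T^\lambda]$. The idea is to wait until the lower process $X$ hits $0$, then use the strong Markov property and scaling to restart. Set $\sigma = \inf\{t\ge 0,\, X_t = 0\}$; since $\alpha>0$, $X$ hits $0$ in finite time with positive probability on compact windows, but more robustly we can simply condition on the event $A = \{\sigma < T,\ \sigma \le 1\}$, i.e. that $X$ reaches $0$ before crossing $Y$ and before time $1$. On $A$, at time $\sigma$ we have $X_\sigma = 0$ and $Y_\sigma = y'$ for some random $y' > 0$, and by the strong Markov property the remaining crossing time is distributed as $T$ under $\Pb_{(0,y')}$, which by scaling equals $y'\,T_{(0,1)}$ in law (with $T_{(0,1)}$ independent of $y'$ given $\F_\sigma$). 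Hence on $A$, $T \ge \sigma + y'\,\widetilde T_{(0,1)} \ge y'\,\widetilde T_{(0,1)}$ where $\widetilde T_{(0,1)}$ is an independent copy. To get a clean constant I would further restrict to the event $A' = A \cap \{y' \ge y/2\}$ (or any fixed fraction), which still has positive probability $p = \Pb_{(x,y)}(A') > 0$ not depending on $\lambda$; then
$$\E_{(x,y)}[T^\lambda] \ge \E_{(x,y)}\big[(y')^\lambda \widetilde T_{(0,1)}^\lambda\, 1_{A'}\big] \ge (y/2)^\lambda\, p\, \E_{(0,1)}[T^\lambda].$$
Writing $\kappa_1 = (y/2)^\lambda p$ gives the lower bound with $\kappa_2 = 0$; if one insists on positive constants independent of $\lambda\in[0,\theta]$, one replaces $(y/2)^\lambda$ by $\min(1,(y/2)^\theta)$ and absorbs the discrepancy on the bounded range $T \le 1$ into the additive constant $\kappa_2$.

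\textbf{Main obstacle.} The delicate point is making the lower-bound coupling airtight: one must verify that the event $A'$ has strictly positive probability (using that $\alpha>0$ forces $X$ to be able to reach $0$, and that $Y$ started from $y$ stays above $y/2$ on a short window with positive probability, independently of $X$), and that $\E_{(0,1)}[T^\lambda]<\infty$ for $\lambda<\theta$ so that the right-hand quantity is meaningful — though for the statement as phrased we only need the inequality, which holds trivially when $\E_{(0,1)}[T^\lambda]=+\infty$. A secondary nuisance is that $T_{(0,1)}$ should be independent of $\F_\sigma$; this is exactly the content of the strong Markov property at $\sigma$, applied to the Markov process $(X,Y)$, with the restart point $(0,y')$ being $\F_\sigma$-measurable. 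Once these positivity and independence facts are in place, the rest is the routine scaling bookkeeping sketched above. \qed
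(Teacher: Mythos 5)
Your upper bound is exactly the paper's argument (coupling in the starting point of $X$ plus scaling), and that part is fine. The lower bound, however, has a genuine gap: it hinges on the claim that, since $\alpha>0$, the process $X$ started from $x>0$ reaches $0$ with positive probability on a compact time window. This is false whenever $\alpha\geq 2$: the point $0$ is not attainable by a squared Bessel process of dimension $\alpha\ge 2$ started from a positive point, so your stopping time $\sigma=\inf\{t\ge0,\,X_t=0\}$ is a.s.\ infinite, the events $A$ and $A'$ are null, and the restart-at-$(0,y')$ mechanism produces nothing. The difficulty is structural, not just technical: monotonicity in the starting point of $X$ goes the wrong way for this direction (starting $X$ lower makes $T$ larger), so one cannot simply replace ``$X$ hits $0$'' by ``$X$ gets small'' and invoke comparison; you would be bounding $\E_{(x,y)}[T^\lambda]$ below by $\E_{(\delta,y)}[T^\lambda]$ with $\delta>0$, while what is needed is a comparison with $\E_{(0,1)}[T^\lambda]$.

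The paper's proof circumvents precisely this obstacle by arguing in the reverse direction: starting from $(0,a)$ with $a<y$, it applies the strong Markov property at $\tau_y^Y$ (the upward hitting time of level $y$ by $Y$, which has moments of all orders), uses the monotonicity of $z\mapsto\E_{(z,y)}[T^\lambda]$ to split according to whether $X_{\tau_y^Y}\le\delta$ or not, and arrives at the inequality (\ref{eq:xy2}), in which the term $\E_{(0,y)}[T^\lambda]$ appears on the right with the small coefficient $2^\theta y^\lambda\Pb_{(0,a)}(X_{\tau_y^Y}\le\delta)$; choosing $\delta$ small enough, this term can be absorbed into the left-hand side, yielding $\E_{(0,1)}[T^\lambda]\lesssim \text{const}+\E_{(\delta,y)}[T^\lambda]$. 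Finally $\E_{(\delta,y)}[T^\lambda]$ is related to $\E_{(x,y)}[T^\lambda]$ by the Markov property at time $1$ on the event $\{T>1,\,Y_1>y,\,X_1\le\delta\}$, again via monotonicity. If you want to salvage your approach, note that it could only work in the regime $0<\alpha<2$ (where $X$ does hit $0$), and even then you must also justify that $\Pb_{(x,y)}(\sigma<T,\ \sigma\le1,\ Y_\sigma\ge y/2)>0$; for the lemma as stated, valid for all $\alpha>0$, an argument of the paper's type (or some other way around the unattainability of $0$) is required.
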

\begin{proof}
Notice first that by a coupling argument  $\Pb_{(x,y)}(T\geq t) \leq \Pb_{(0,y)}\left(T \geq t\right)$ which implies by scaling that for $\lambda\geq0$:
$$
\E_{(x,y)}\left[T^\lambda\right] \leq \E_{(0,y)}\left[T^\lambda\right] = y^\lambda \E_{(0,1)}\left[T^\lambda\right]. 
$$
To prove a converse inequality, take $a\in(0,y)$ and recall that $\tau_y^Y = \inf\{t\geq0,\; Y_t =y\}$ denotes the first hitting time of $y$ by the process $Y$. Then, applying the strong Markov property at the time $\tau_y^Y$, 
\begin{align*}
\Pb_{(0,a)}(T>t) &= \Pb_{(0,a)}(T>t, T\leq \tau_y^Y) +\Pb_{(0,a)}(T>t, T> \tau_y^Y)\\
&\leq 2 \Pb_{(0,a)}(\tau_y^Y >t) + \E_{(0,a)}\left[ \Pb_{(X_{\tau_y^Y}, y)}( T>t-s)_{|s=\tau_y^Y} 1_{\{T>\tau_y^Y\}}1_{\{\tau_y^Y\leq t\}}\right]
\end{align*}
hence, for $\lambda\leq \theta$, using the standard inequality $(a+b)^\lambda \leq 2^\lambda (a^\lambda + b^\lambda)$,
\begin{align}
\notag\E_{(0,a)}[T^\lambda] & \leq 2\E_{(0,a)}[(\tau_y^Y)^\lambda] +  \int_0^{y} \int_0^{+\infty}  \E_{(z,y)}[(T+s)^{\lambda}] \Pb_{(0,a)}(X_{\tau_y^Y}\in dz, \tau_y^Y\in ds )\\
\notag&\leq2\E_{(0,a)}[(\tau_y^Y)^\lambda] + 2^{\lambda} \int_0^{y}  \int_0^{+\infty}  \left(\E_{(z,y)}[T^\lambda]+ s^\lambda\right)\Pb_{(0,a)}(X_{\tau_y^Y}\in dz, \tau_y^Y\in ds )\\
\label{eq:2theta}&\leq (2+2^\theta) \E_{(0,a)}[(\tau_y^Y)^\lambda] +  2^{\theta} \int_0^{y}  \E_{(z,y)}[T^\lambda]\Pb_{(0,a)}(X_{\tau_y^Y}\in dz).
\end{align}
Recall then that, since $Y_0=a<y$, the r.v. $\tau_y^Y$ admits positive moments of all orders, see for instance \cite{Shi2}, so the first term on the right-hand side is always finite. Now let $\delta \in(0,y)$. Since the function $z\rightarrow \E_{(z,y)}[T^\lambda]$ is decreasing on $[0,y]$, we have
$$ \int_0^{y}  \E_{(z,y)}[T^\lambda]\Pb_{(0,a)}(X_{\tau_y^Y}\in dz) \leq     \E_{(0,y)}[T^\lambda] \Pb_{(0,a)}(X_{\tau_y^Y}\leq \delta) +   \E_{(\delta,y)}[T^\lambda]. $$
As a consequence, plugging this last inequality in (\ref{eq:2theta}) and using the scaling property, we obtain the bound
\begin{equation}\label{eq:xy2}
\left(a^\lambda-  2^\theta y^\lambda  \Pb_{(0,a)}(X_{\tau_y^Y}\leq \delta)   \right)\E_{(0,1)}[T^\lambda] \leq(2+2^\theta) \E_{(0,a)}[(\tau_y^Y)^\lambda] +2^\theta \E_{(\delta,y)}[T^\lambda]
\end{equation}
in which we shall take $\delta$ small enough for the left-hand side to be strictly positive. Finally, it remains to write, applying the Markov property,
$$
 \E_{(x,y)}[(T-1)^\lambda  1_{\{T>1\}}]\geq \E\left[1_{\{T>1, Y_1>y, X_1\leq \delta  \} }\E_{(X_1, Y_1)}[T^\lambda]\right] \geq \E\left[1_{\{T>1, Y_1>y, X_1\leq \delta  \}}\right] \E_{(\delta, y)}[T^\lambda] 
$$
hence, going back to (\ref{eq:xy2}), we have thus proven that for $0\leq \lambda \leq \theta$, 
$$\kappa_1  \E_{(0,1)}\left[T^\lambda\right] \leq \kappa_2 +  \E_{(x,y)}[(T-1)^\lambda1_{\{T>1\}} ] \leq \kappa_2 +   \E_{(x,y)}[T^\lambda] $$
where $\kappa_1, \kappa_2$ are two positive constants. This is the lower bound of Lemma \ref{lem:Txy}.
\end{proof}

\subsection{Maximal inequalities }

To go from $X_T$ to $T$, we shall rely on maximal inequalities for squared Bessel processes. Indeed, recall from DeBlassie \cite{DeB} that there exist two constants  $c_{\alpha,\lambda}$ and $C_{\alpha,\lambda}$ such that for any stopping time $\zeta$ (with respect to $X$):
\begin{equation}\label{eq:BDG}
c_{\alpha,\lambda} \E_{(x,y)}[(\zeta+x)^\lambda]\leq  \E_{(x,y)}\left[\sup_{s\leq \zeta} X_s^\lambda\right] \leq C_{\alpha,\lambda} \E_{(x,y)}[(\zeta+x)^\lambda].
\end{equation}
As a consequence, taking $\zeta=T$, we first deduce that
\begin{equation}\label{eq:XTT}
\E_{(x,y)}\left[X_T^\lambda\right]\leq  \E_{(x,y)}\left[\sup_{s\leq T} X_s^\lambda\right]  \leq C_{\alpha, \lambda} \E_{(x,t)}[(T+x)^\lambda]
\end{equation}
which  implies that  $ \E_{(x,y)}\left[ T^\lambda\right] =+\infty$ as soon as $\lambda\geq \theta$. 
Note that a converse inequality exists: from Pedersen \cite{Ped}, if $\lambda> 1-\alpha/2$ and $\zeta$ is a stopping time such that $\E_{(x,y)}[\zeta^{\lambda}]<+\infty$, then
\begin{equation}\label{eq:Ped}
\E_{(x,y)}\left[\sup_{s\leq \zeta} X_s^{\lambda}\right] \leq \left(\frac{2\lambda}{2\lambda - (2-\alpha)} \right)^{\frac{2\lambda}{2-\alpha}} \E_{(x,y)}\left[X_{\zeta}^{\lambda}\right]. 
\end{equation}
Unfortunately, to apply this inequality in our set-up, one needs the assumption that $T$ admits moments of order up to $\theta$, which is precisely what we wish to prove.

\section{Proof of Theorem \ref{theo:1}}

We prove in this section that $\E_{(0,y)}[T^\lambda]$ is finite for $0\leq \lambda<\theta$. As explained in the first section, the idea is to compare the moments of $T$ with those of $X_T$. To do so, we  construct martingales involving  both $T$ and $X_T$.  Due to a technical difficulty, we will need to separate the cases $\alpha\leq \beta$ and  $\alpha>\beta$, constructing two different martingales accordingly.

\subsection{The case $\alpha\leq \beta$}\label{sec:4}

Recall that in this case $\theta\in(0,1]$. We first show that $\theta> 1-\frac{\beta}{2}$. Indeed, since the paths of $X$ and $Y$ are continuous and non-negative, the path inequality $T \leq \tau_0^Y$ implies that $\E_{(0,1)}[T^\lambda] \leq \E_{(0,1)}[(\tau_0^Y)^\lambda]$. The right-hand side is finite if and only if $\beta<2$ and $\lambda < 1-\frac{\beta}{2}$, see for instance \cite{Har}. As a consequence, we deduce from (\ref{eq:XTT}) the lower bound  $\theta> 1-\frac{\beta}{2}$, with a strict inequality since $F_{\alpha,\beta}\left(1-\frac{\beta}{2}\right) = 2^{\frac{\beta}{2}-1}>0$. \\

\noindent
We  now construct a  martingale. Let us set for $z>0$ 
\begin{equation}\label{eq:Psi}
\Psi^{(\beta)}_\lambda(z):= z^{\frac{\eta}{2}} K_{\eta}\left(\sqrt{2\lambda z}\right) = 2^{\frac{\eta}{2}-1} \lambda^{\frac{\eta}{2}} \int_0^{+\infty} e^{-\lambda s} e^{-\frac{z}{2s}} s^{\eta-1} ds
\end{equation}
where $\eta= 1-\frac{\beta}{2}\in(-\infty,1)$ and  $K_\eta$ denotes the usual McDonald's function.  In particular, $\Psi^{(\beta)}_\lambda$ is a solution of the ordinary differential equation:
\begin{equation}\label{eq:f''}
z f^{\prime\prime}(z) + \frac{\beta}{2} f^\prime(z) - \frac{\lambda}{2}f(z) = 0
\end{equation} such that $\displaystyle \lim_{z\rightarrow +\infty}\Psi^{(\beta)}_\lambda(z) =0$. 
Take $Y_0=y>0$ and $0<\varepsilon< y$.
Applying It\^o's formula, we deduce from (\ref{eq:f''}) that the process 
$$M_t = e^{-\lambda (t\wedge \tau_\varepsilon^Y)} \Psi_\lambda^{(\beta)}\left( Y_{ t\wedge \tau_\varepsilon^Y}\right), \quad t\geq0 $$
is a bounded local martingale, hence a true martingale under $\Pb_{(0,y)}$. 
Then Doob's optional sampling theorem yields the identity 
\begin{equation}\label{eq:Doob-}
\E_{(0,y)}\left[e^{-\lambda (T\wedge \tau_\varepsilon^Y)} \Psi_\lambda^{(\beta)}\left( Y_{ T\wedge \tau_\varepsilon^Y}\right)\right] = \Psi_\lambda^{(\beta)}(y).
\end{equation}
\subsubsection{We first assume that $\eta<0$.} In this case, we may let $\lambda\downarrow 0$ in (\ref{eq:Doob-}) and apply the monotone convergence theorem to obtain
\begin{equation}\label{eq:eta<0}
 \int_0^{+\infty} s^{\eta-1}   \E_{(0,y)}\left[e^{-\frac{Y_{ T\wedge \tau_\varepsilon^Y} }{2s}}\right] ds =\int_0^{+\infty} s^{\eta-1}  e^{-\frac{y}{2s}} ds 
 \end{equation}
i.e. $\E_{(0,y)}\left[Y_{ T\wedge \tau_\varepsilon^Y}^\eta\right] = y^\eta$. 
Then, subtracting $\E_{(0,y)}\left[\Psi_\lambda^{(\beta)}\left( Y_{ T\wedge \tau_\varepsilon^Y}\right)\right]$ on both sides of (\ref{eq:Doob-}), we obtain from (\ref{eq:Psi}) and (\ref{eq:eta<0}), using an integration by parts:
\begin{align*}
 & \E_{(0,y)}\left[ \int_0^{+\infty}    e^{-\lambda t} 1_{\{T\wedge \tau_\varepsilon^Y>  t\}} dt  \int_0^{+\infty} e^{-\lambda s}   e^{-\frac{Y_{ T\wedge \tau_\varepsilon^Y} }{2s}} s^{\eta-1} ds\right]\\       & \qquad\qquad = \frac{1}{\lambda} \int_0^{+\infty} e^{-\lambda s}  \  \E_{(0,y)}\left[e^{-\frac{Y_{ T\wedge \tau_\varepsilon^Y} }{2s}}-e^{-\frac{y}{2s}} \right] s^{\eta-1} ds\\
  &\qquad \qquad =  \int_0^{+\infty} e^{-\lambda s} \left( \int_s^{+\infty}   \E_{(0,y)}\left[e^{-\frac{y}{2u}} -e^{-\frac{Y_{ T\wedge \tau_\varepsilon^Y} }{2u}} \right]u^{\eta-1} du\right) ds.
  \end{align*}
Inverting these Laplace transforms yields the identity:
\begin{equation}\label{eq:defH}
  \int_0^{t}  s^{\eta-1}\E_{(0,y)}\left[ 1_{\{ T\wedge \tau_\varepsilon^Y>t-s\}} e^{-\frac{Y_{ T\wedge \tau_\varepsilon^Y}}{2s}}\right] ds = \int_t^{+\infty} s^{\eta-1} \E_{(0,y)}\left[e^{-\frac{y}{2s}} -e^{-\frac{Y_{ T\wedge \tau_\varepsilon^Y}}{2s}}   \right]ds=:H_{\varepsilon}(t).
  \end{equation}
We now compute the Mellin transform of $H_{\varepsilon}$. Recall the formula, for $a,b>0$ and $\nu\in(0,1)$:
\begin{equation}\label{eq:formulelog}
\int_0^{+\infty} t^{\nu-1} \left(e^{-\frac{a}{t}} - e^{-\frac{b}{t}}\right)dt = \Gamma(-\nu) \left(a^\nu - b^\nu\right). 
\end{equation}
Take $\lambda \in(0,\theta)$. Separating the cases $Y_{T\wedge \tau_\varepsilon^Y} \leq y$ and $Y_{T\wedge \tau_\varepsilon^Y} > y$ and using a change of variables, we may apply the Fubini-Tonelli theorem to obtain 
\begin{align}
\notag \int_0^{+\infty} t^{\lambda-\eta-1} H_\varepsilon(t) dt &= \int_0^{+\infty} t^{\lambda-1} \int_1^{+\infty} u^{\eta-1}  \E_{(0,y)}\left[e^{-\frac{y}{2ut}}-e^{-\frac{Y_{ T\wedge \tau_\varepsilon^Y} }{2ut}} \right] du dt\\
\notag &= \Gamma(-\lambda)  \int_1^{+\infty} u^{\eta-1}  \E_{(0,y)}\left[\left(\frac{y }{2u}\right)^\lambda- \left(\frac{Y_{ T\wedge \tau_\varepsilon^Y} }{2u}  \right)^\lambda \right] du\\
\label{eq:MelH}&=\Gamma(-\lambda)  \frac{2^{-\lambda}}{\eta-\lambda} \left( \E_{(0,y)}\left[Y_{ T\wedge \tau_\varepsilon^Y}^\lambda\right] - y^\lambda\right).
\end{align}
\noindent
We now come back to (\ref{eq:defH}) and observe that:
\begin{align*}
\notag H_\varepsilon(t) &\geq  \E_{(0,y)}\left[1_{\{ T\wedge \tau_\varepsilon^Y>t\}}   \int_0^{t} s^{\eta-1} e^{-\frac{Y_{ T\wedge \tau_\varepsilon^Y}}{2s}}ds  \, 1_{\{ Y_{T\wedge \tau_\varepsilon}\leq  t\}}\right]\\
\notag &\geq  \Pb_{(0,y)}\left( T\wedge \tau_\varepsilon^Y>t,  Y_{T\wedge \tau_\varepsilon^Y}\leq  t\right)  \int_0^{t}  s^{\eta-1} e^{-\frac{ t}{2s}} ds\\
&\geq  t^{\eta}   \left( \Pb_{(0,y)}\left( T\wedge \tau_\varepsilon^Y>t\right)  -   \Pb_{(0,y)}\left(  Y_{T\wedge \tau_\varepsilon^Y}>  t\right)\right)     \int_0^{1}  u^{\eta-1} e^{-\frac{1 }{2u}} du. 
\end{align*}
Integrating this relation against $(\lambda-\eta) t^{\lambda-\eta-1}$ on $(0, +\infty)$ for $\lambda\in(0,\theta)$, we deduce from (\ref{eq:MelH}) that  there exists a constant $K$ such that
\begin{align}
\notag \E_{(0,y)}\left[\left(T\wedge \tau_\varepsilon^Y\right)^\lambda\right]  &\leq  \E_{(0,y)}\left[Y_{T\wedge \tau_\varepsilon^Y}^{\lambda}\right] \left(1-  \frac{2^{-\lambda}K}{\lambda-\eta} \Gamma(-\lambda)  \right)  + \frac{2^{-\lambda}K}{\lambda-\eta} \Gamma(-\lambda)  y^{\lambda}  \\
\label{eq:T<Y} &\leq  \left(\E_{(0,y)}\left[Y_{T}^{\lambda}\right] + \varepsilon^\lambda\right) \left(1-  \frac{2^{-\lambda}K}{\lambda-\eta} \Gamma(-\lambda)  \right)  + \frac{2^{-\lambda}K}{\lambda-\eta} \Gamma(-\lambda)  y^{\lambda} .
\end{align}
The upper bound of Theorem \ref{theo:1} now follows by letting $\varepsilon\downarrow0$ and applying the monotone convergence theorem. \qed

\subsubsection{We now assume that $\eta\in(0,1)$.} Then, the function $\Psi_{\lambda}^{(\beta)}$ is well-defined and finite at $z=0$. As a consequence, we may take directly $\varepsilon=0$ (i.e. remove the stopping time $\tau_{\varepsilon}^Y$) when applying Doob's optional stopping theorem. However, we cannot let $\lambda\downarrow0$ in (\ref{eq:Doob-}) as both sides of (\ref{eq:eta<0}) would be infinite.
Nevertheless,  from Proposition \ref{prop:X_T} and (\ref{eq:formulelog}), 
$$ \int_0^{+\infty} s^{\eta-1} \left(  \E_{(0,y)}\left[e^{-\frac{Y_{ T} }{2s}}\right] -e^{-\frac{y}{2s}}\right) ds = \Gamma(-\eta)\left( \E_{(0,y)}\left[Y_{ T}^\eta\right] - y^\eta  \right) =0$$
and the previous argument applies mutadis mutandis, taking $\lambda \in (\eta, \theta)$.

\subsubsection{We now assume that $\eta=0$, (i.e. $\beta=2$).} To simplify, we will rely here on a coupling argument. Take $\varepsilon>0$ and recall from Remark \ref{rem:Tab} that for $\lambda >0$:
$$\E_{(0,y)}\left[T_{\alpha, 2}^\lambda\right] \leq \E_{(0,y)}\left[T_{\alpha, 2+\varepsilon}^\lambda\right]. $$
Consequently, we deduce from the first part of the proof that $\E_{(0,y)}\left[T_{\alpha, 2}^\lambda\right] <+\infty$ for any $\lambda<\theta(\alpha, 2+\varepsilon)$. The result then follows by letting $\varepsilon\downarrow0$, since $\theta$ is continuous in its parameters.\qed

\noindent
\begin{remark}\label{rem:T1}
Note that this argument no longer holds for $\alpha>\beta$. Indeed, in this latter case $\theta>1$, but since the right-hand side of (\ref{eq:T<Y}) has a pole at $\lambda=1$ (coming from the Mellin transform of $ H_{\varepsilon}$), we can only deduce that  $\E_{(0,y)}[T^\lambda]<+\infty$ for $\lambda \in (0,1)$.
\end{remark}

\subsection{The case $\alpha>\beta$}\label{sec:3.4}

Let us define
$$\gamma = \sup\{s\geq0,\; \E_{(0,y)}[T^s]<+\infty\}.$$
We already know from (\ref{eq:XTT}) that $\gamma\leq \theta$ so let us assume that $\gamma<\theta$. We shall prove by contradiction that this is not possible.  

\subsubsection{First step} We first prove that if $\gamma<\theta$, then  $\E_{(0,y)}[T^\gamma]<+\infty$, i.e. that the supremum in the definition of $\gamma$ is attained. To do so, observe first that as explained in Remark \ref{rem:T1}, $\gamma \geq 1$. This may also be checked using the following argument:  since $\alpha>\beta$, the SDE (\ref{eq:SDE}) defining $X$ and $Y$ implies that the process 
$$M_{t\wedge T} =Y_{t\wedge T}-X_{t\wedge T} +(\alpha-\beta)t\wedge T = y+ \int_0^{t\wedge T} \sqrt{Y_s} dW_s - \int_0^{t\wedge T} \sqrt{X_s} dB_s , \qquad t\geq0$$
is a positive local martingale, hence a supermartingale.
Applying Fatou's lemma, we thus have
$$(\alpha-\beta)\E_{(0,y)}[T] = \E_{(0,y)}\left[\liminf_{t\rightarrow +\infty} M_{t\wedge T}\right] \leq  \liminf_{t\rightarrow +\infty} \E_{(0,y)}\left[M_{t\wedge T}\right] \leq  \E_{(0,y)}\left[ M_0\right] =y$$
which proves that $T$ is integrable, i.e. $\gamma\geq1$. In particular, $\gamma> 1-\alpha/2$, which allows us to apply the maximal inequality of Pedersen \cite{Ped} recalled in (\ref{eq:Ped}). Take $\varepsilon>0$. By definition of $\gamma$,  $\E_{(0,y)}[T^{\gamma-\varepsilon}]<+\infty$, hence we have:
$$\E_{(0,y)}\left[\sup_{s\leq T} X_s^{\gamma-\varepsilon}\right] \leq \left(\frac{2(\gamma-\varepsilon)}{2(\gamma-\varepsilon) - (2-\alpha)} \right)^{\frac{2(\gamma-\varepsilon)}{2-\alpha}} \E_{(0,y)}\left[X_{T}^{\gamma-\varepsilon}\right]. $$
Using (\ref{eq:BDG}), we thus deduce from Fatou's lemma and the monotone convergence theorem that
$$c_{\alpha,\gamma} \E_{(0,y)}[T^\gamma] \leq c_{\alpha,\gamma} \liminf_{\varepsilon\rightarrow 0}\E_{(0,y)}[T^{\gamma-\varepsilon}] \leq  \left(\frac{2\gamma}{2\gamma - (2-\alpha)} \right)^{\frac{2\gamma}{2-\alpha}}  \left(1 +  \E_{(0,y)}\left[X_{T}^{\gamma}1_{\{X_T\geq1\}}\right] \right)<+\infty$$
since $\gamma<\theta$. As a consequence, the supremum in the definition of $\gamma$ is attained. 

\subsubsection{Second step} We shall now contradict the definition of $\gamma$ by showing that $T$ admits moments of order $\gamma+\varepsilon$ for some $\varepsilon>0$ small enough. To do so, notice first, still from the SDE (\ref{eq:SDE}), that since $\alpha>\beta$, the process $(\alpha Y_{t\wedge T}-  \beta X_{t\wedge T}, \; t\geq0)$ is a positive local martingale under $\Pb_{(0,y)}$.  
Let us set  
$$\displaystyle A_t =  \int_0^t (\beta^2 X_s + \alpha^2 Y_s) ds, \qquad t\geq0,$$
and observe that for $z>0$, the bounded process 
\begin{equation}\label{eq:expmart}
 \exp\left(z (\beta X_{t\wedge T}- \alpha Y_{t\wedge T}) - \frac{z^2}{2}A_{t\wedge T}\right), \qquad t\geq0,
 \end{equation}
is a positive martingale under $\Pb_{(0,y)}$. Applying Doob's optional stopping theorem and the dominated convergence theorem, we deduce that 
\begin{equation}\label{eq:Doob}
\E_{(0,y)}\left[e^{-z (\alpha - \beta) X_T - \frac{z^2}{2} A_T }\right] = e^{-z \alpha y}.
\end{equation}
The next lemma shows that the finiteness of the moments of $T$ is equivalent to those of $\sqrt{A_T}$.

\begin{lemma}\label{lem:TAT}
For $\lambda\in[0, \theta)$, 
$$\E_{(0,y)}[T^\lambda] <+\infty \qquad \Longleftrightarrow \qquad \E_{(0,y)}\left[A_T^{\frac{\lambda}{2}}\right]<+\infty.$$
\end{lemma}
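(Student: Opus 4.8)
The plan is to realise $A_T$ (up to a factor $4$) as the terminal bracket of an explicit martingale built from $(X,Y)$, to squeeze that martingale between constant multiples of $\sup_{s\le T}Y_s$, and to link the latter to $T$ through the maximal inequalities of DeBlassie.

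First, from the SDE (\ref{eq:SDE}) and the independence of $B$ and $W$, the stopped process $M_t:=\alpha Y_{t\wedge T}-\beta X_{t\wedge T}$, $t\ge0$, is a continuous local martingale with $M_0=\alpha y$ and bracket
$$\langle M\rangle_t=4\alpha^2\int_0^{t\wedge T}Y_s\,ds+4\beta^2\int_0^{t\wedge T}X_s\,ds=4A_{t\wedge T}.$$
Since $T<+\infty$ a.s. by Proposition \ref{prop:X_T}, letting $t\to+\infty$ gives $\langle M\rangle_\infty=4A_T$, and the Burkholder--Davis--Gundy inequalities (valid for every exponent $p>0$, with constants absorbing the factor $4$) yield $\E_{(0,y)}\big[(\sup_{t\ge0}|M_t-M_0|)^p\big]\asymp\E_{(0,y)}\big[A_T^{p/2}\big]$.

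Next, on $[0,T]$ one has $0\le X_s\le Y_s$ (because $X_0=0<y=Y_0$ and $X_T=Y_T$ by path-continuity), so, using $\alpha>\beta\ge0$,
$$(\alpha-\beta)\,Y_{t\wedge T}\le M_t\le\alpha\,Y_{t\wedge T}\qquad(t\ge0).$$
Combining this with $|M_t-M_0|\le M_t+\alpha y\le 2\alpha\sup_{s\le T}Y_s$ on the one hand, and with $\sup_t|M_t-M_0|\ge(\alpha-\beta)\sup_{s\le T}Y_s-\alpha y$ together with $\sup_t|M_t-M_0|\ge 0$ on the other, one gets the two pathwise bounds
$$\Big(\sup_t|M_t-M_0|\Big)^p\le(2\alpha)^p\Big(\sup_{s\le T}Y_s\Big)^p,\qquad \Big(\sup_{s\le T}Y_s\Big)^p\le\frac{2^p}{(\alpha-\beta)^p}\Big(\sup_t|M_t-M_0|\Big)^p+\Big(\tfrac{2\alpha y}{\alpha-\beta}\Big)^p.$$
Taking expectations and invoking the BDG comparison above gives, with $p=\lambda$,
$$\E_{(0,y)}\big[A_T^{\lambda/2}\big]<+\infty\iff\E_{(0,y)}\Big[\big(\textstyle\sup_{s\le T}Y_s\big)^{\lambda}\Big]<+\infty.$$

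Finally, $Y$ is a squared Bessel process of dimension $\beta$ adapted to a filtration for which $T$ is a stopping time, so the maximal inequalities (\ref{eq:BDG}) of DeBlassie \cite{DeB} apply to $Y$ and give $\E_{(0,y)}[(\sup_{s\le T}Y_s)^{\lambda}]\asymp\E_{(0,y)}[(T+y)^{\lambda}]$; since $\E_{(0,y)}[(T+y)^\lambda]<+\infty$ iff $\E_{(0,y)}[T^\lambda]<+\infty$, chaining this with the previous equivalence proves the lemma. The step I expect to be the crux is the construction and exploitation of $M$: one needs its bracket to be \emph{exactly} $4A_T$ while $M$ is simultaneously trapped between constant multiples of $\sup_{s\le T}Y_s$, which is precisely what lets BDG transfer finiteness of fractional moments in \emph{both} directions. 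A minor point is the degenerate case $\beta=0$, where $M=\alpha Y_{\cdot\wedge T}$ is even simpler and, if the DeBlassie bounds are delicate for dimension $0$, one may instead appeal to the monotonicity and continuity of $\theta$ from Remark \ref{rem:Tab}.
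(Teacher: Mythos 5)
Your route is genuinely different from the paper's, and for $0<\beta<\alpha$ it works. The paper proves the implication $\E_{(0,y)}[T^\lambda]<\infty\Rightarrow\E_{(0,y)}[A_T^{\lambda/2}]<\infty$ by Cauchy--Schwarz (using $A_T\le 2\alpha^2\,T\sup_{s\le T}Y_s$ on $\{X\le Y\}$) together with the upper bound in \eqref{eq:BDG} for $Y$, and the converse through the SDE \eqref{eq:SDE} for $X$: $\alpha T\le X_T+\bigl|\int_0^T\sqrt{X_s}\,dB_s\bigr|$, BDG for the stochastic integral, $\int_0^T X_s\,ds\le A_T/\alpha^2$, and the finiteness of $\E_{(0,y)}[X_T^\lambda]$ from Proposition \ref{prop:X_T} --- which is where the hypothesis $\lambda<\theta$ enters. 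You instead apply BDG to the stopped local martingale $\alpha Y_{\cdot\wedge T}-\beta X_{\cdot\wedge T}$, whose bracket is exactly $4A_{\cdot\wedge T}$, sandwich it between $(\alpha-\beta)\sup_{s\le T}Y_s$ and constants times $\sup_{s\le T}Y_s$ (this sandwich needs $\alpha>\beta$ strictly, which holds in this subsection), and then invoke \emph{both} bounds of \eqref{eq:BDG} for $Y$ with the stopping time $T$. That buys a cleaner statement: the equivalence would hold for every $\lambda\ge0$, using Proposition \ref{prop:X_T} only for $T<\infty$ a.s.; and reading $T$ as a stopping time of the joint filtration is the same reading of \eqref{eq:BDG} the paper itself uses.

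The genuine gap is the case $\beta=0$, which you flag but do not resolve. The two-sided inequality \eqref{eq:BDG} is stated (and proved by DeBlassie) for a positive dimension, and its lower bound is actually false in dimension $0$: a $0$-dimensional squared Bessel process started at $y$ is a nonnegative martingale absorbed at $0$, with $\Pb(\sup_{s<\infty}Y_s>a)=y/a$ for $a>y$, so for $\lambda<1$ the quantity $\E\bigl[\sup_{s\le t}Y_s^\lambda\bigr]$ stays bounded as $t\to\infty$ while $\E[(t+y)^\lambda]\to\infty$; hence you cannot simply cite \eqref{eq:BDG} for $Y$ when $\beta=0$. Moreover the direction you lose, $\E_{(0,y)}[\sup_{s\le T}Y_s^\lambda]<\infty\Rightarrow\E_{(0,y)}[T^\lambda]<\infty$, is exactly the one needed in the second step of the proof of Theorem \ref{theo:1}. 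Your fallback via Remark \ref{rem:Tab} (comparison with $\beta=\varepsilon$ and continuity of $\theta$) can rescue the \emph{theorem} at $\beta=0$, in the same way the paper handles $\beta=2$ in the case $\alpha\le\beta$, but it does not prove Lemma \ref{lem:TAT} itself there. To close the gap, either restrict your argument to $\beta>0$ and treat $\beta=0$ at the level of the theorem, or run the reverse implication as the paper does, through $X$ (whose dimension $\alpha$ is strictly positive) and the moment bound of Proposition \ref{prop:X_T}.
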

\begin{proof}
On the one hand, applying Cauchy-Schwarz's inequality and Formula (\ref{eq:BDG}), 
\begin{align*}
\E_{(0,y)}\left[A_T^{\frac{\lambda}{2}}\right]&\leq \E_{(0,y)}\left[  T^{\frac{\lambda}{2}} \left(  2 \alpha^2 \sup_{s\leq T} Y_s    \right)^{\frac{\lambda}{2}}   \right]\\
&  \leq \E_{(0,y)}\left[  T^{\lambda}\right]^{\frac{1}{2}}   (\sqrt{2} \alpha)^\lambda \E_{(0,y)}\left[  \sup_{s\leq T}Y_s^\lambda\right]^{\frac{1}{2}} \leq  (\sqrt{2} \alpha)^\lambda \sqrt{C_{\alpha, \lambda} }\E_{(0,y)}\left[  T^{\lambda}\right]
\end{align*}
which proves  that if the expectation of $T^\lambda$ is finite, then so is the expectation of $A_T^{\frac{\lambda}{2}}$. On the other hand, going back to the SDE (\ref{eq:SDE}) defining $X$ and applying the Burkholder-Davis-Gundy inequality, we deduce that there exists a constant $K_\lambda$ such that 
\begin{align*}
\alpha^\lambda \E_{(0,y)}[T^\lambda] & \leq  \E_{(0,y)}\left[  \left(X_T  + \left|\int_0^T  \sqrt{X_s} dB_s\right|\right)^\lambda \right]   \\
&\leq K_\lambda \left(\E_{(0,y)}[X_T^\lambda]  +  \E_{(0,y)}\left[\left|\int_0^T  X_s ds\right|^\frac{\lambda}{2} \right]   \right)\\
& \leq K_\lambda \left(\E_{(0,y)}[X_T^\lambda]  +   \frac{1}{\alpha^\lambda} \E_{(0,y)}\left[A_T^\frac{\lambda}{2} \right]   \right)
\end{align*}
which concludes the proof of Lemma \ref{lem:TAT} since $X_T$ admits moments of order $\lambda<\theta$.
\end{proof}
Thanks to Lemma \ref{lem:TAT}, it is thus enough to prove that $\E_{(0,y)}\left[A_T^{\frac{\gamma+\varepsilon}{2}}\right]<+\infty$ for some $\varepsilon>0$ small enough such that $\gamma+\varepsilon<\theta$. We now set $n$ equal to the integer part of $\gamma$:
$n = \lfloor \gamma\rfloor$. Differentiating $n+1$ times Formula (\ref{eq:Doob}) with respect to $z$ and applying Leibniz rule, we obtain
\begin{equation}\label{eq:leib}
\sum_{k=0}^{n+1}  \binom{n+1}{k}   (\alpha-\beta)^{k}  \E_{(0,y)}\left[ X_T^{k}  A_T^{\frac{n+1-k}{2}}H_{n+1-k}(z \sqrt{A_T}) e^{-z (\alpha - \beta) X_T - \frac{z^2}{2} A_T }   \right]     =  (\alpha y)^{n+1} e^{-z \alpha y} 
\end{equation}
where the sequence $(H_k)_{k\geq 0}$ denotes the Hermite's polynomials, which are defined by:
$$H_n(x) = (-1)^n e^{\frac{x^2}{2}} \frac{\text{d}^n}{\text{d}x^n} e^{-\frac{x^2}{2}}.$$
 In particular, for any $k\in \N$, since $H_k$ is a polynomial, there exists a finite constant $c_k$ such that
$$ \sup_{x\geq 0} |H_k(x)| e^{-\frac{1}{4} x^2}  <c_k.$$
Take $\varepsilon$ small enough such that $n+1-\gamma-\varepsilon>0$ and $\gamma+\varepsilon<\theta$. We now integrate (\ref{eq:leib}) against $z^{n-\gamma-\varepsilon}$ on $(0,+\infty)$, applying repeatedly the Fubini-Tonelli theorem.
\begin{enumerate}
\item The right-hand side gives:
$$(\alpha y)^{n+1} \int_0^{+\infty}  z^{n-\gamma-\varepsilon}  e^{-z \alpha y}dz  =  \Gamma(n+1-\gamma-\varepsilon) (\alpha y)^{\gamma+\varepsilon}. $$
\item The term for $k=n+1$ gives:
\begin{multline*}
  (\alpha-\beta)^{n+1} \int_0^{+\infty}  z^{n-\gamma-\varepsilon}  \E_{(0,y)}\left[ X_T^{n+1} e^{-z (\alpha - \beta) X_T - \frac{z^2}{2} A_T }   \right]  dz\\ \leq  \Gamma(n+1-\gamma-\varepsilon) (\alpha-\beta)^{\gamma+\varepsilon} \E_{(0,y)}\left[X_T^{\gamma+\varepsilon} \right] <+\infty.
  \end{multline*}
\item The main sum, for $1\leq k \leq n$, gives:
\begin{align*}
&\left|\sum_{k=1}^{n}  \binom{n+1}{k}   (\alpha-\beta)^{k}  \int_0^{+\infty}  z^{n-\gamma-\varepsilon} \E_{(0,y)}\left[ X_T^{k}  A_T^{\frac{n+1-k}{2}}H_{n+1-k}(z \sqrt{A_T}) e^{-z (\alpha - \beta) X_T - \frac{z^2}{2} A_T }   \right] dz\right|\\
&\qquad \leq \sum_{k=1}^{n}  \binom{n+1}{k}   (\alpha-\beta)^{k}  c_{n+1-k}  \int_0^{+\infty}  z^{n-\gamma-\varepsilon} \E_{(0,y)}\left[ X_T^{k}  A_T^{\frac{n+1-k}{2}} e^{- \frac{z^2}{4} A_T}   \right]dz\\
&\qquad =  \Gamma\left(\frac{n+1-\gamma-\varepsilon}{2}\right) 2^{n-\gamma-\varepsilon }  \sum_{k=1}^{n}  \binom{n+1}{k} (\alpha-\beta)^{k}  c_{n+1-k}    \E_{(0,y)}\left[ X_T^{k}  A_T^{\frac{\gamma+\varepsilon-k}{2}}    \right].
\end{align*}
Furthermore, applying H\"older's inequality, the expectation in the sum on the right-hand side is smaller than
\begin{equation}\label{eq:Holder}
 \E_{(0,y)}\left[ X_T^{k}  A_T^{\frac{\gamma+\varepsilon-k}{2}}    \right]\leq \E_{(0,y)}\left[ X_T^{\frac{\gamma k}{k-\varepsilon}}\right]^{\frac{k-\varepsilon}{\gamma}}\E_{(0,y)}\left[  A_T^{\frac{\gamma}{2}}    \right]^{\frac{\gamma+\varepsilon-k}{\gamma}} <+\infty
 \end{equation}
which is finite provided that $\varepsilon$ is small enough so that $\gamma < \theta (1-\varepsilon)$.
\item As for the last term $k=0$, let us take $\delta>0$ and first write:
\begin{align*}
 &\left|\int_0^{+\infty} z^{n-\gamma-\varepsilon}  \E_{(0,y)}\left[A_T^{\frac{n+1}{2}}H_{n+1}(z \sqrt{A_T}) e^{-z (\alpha - \beta) X_T - \frac{z^2}{2} A_T } 1_{\{ \delta \sqrt{A_T}\leq  X_T\}}   \right] dz\right| \\
 &\qquad \leq  c_{n+1} \int_0^{+\infty} z^{n-\gamma-\varepsilon}  \E_{(0,y)}\left[A_T^{\frac{n+1}{2}} e^{-z (\alpha - \beta) X_T} 1_{\{ \delta \sqrt{A_T}\leq  X_T\}}   \right] dz \\
&\qquad = c_{n+1}  \frac{\Gamma(n+1-\gamma-\varepsilon) }{ (\alpha-\beta)^{n+1-\gamma-\varepsilon}}   \E_{(0,y)}\left[A_T^{\frac{n+1}{2}}  X_T^{-n-1+\gamma+\varepsilon} 1_{\{ \delta \sqrt{A_T}\leq  X_T\}}   \right] \\
&\qquad\leq  c_{n+1}  \frac{\Gamma(n+1-\gamma-\varepsilon) }{ (\alpha-\beta)^{n+1-\gamma-\varepsilon}}   \frac{1}{\delta^{n+1}}  \E_{(0,y)}\left[X_T^{\gamma+\varepsilon} \right] <+\infty.
\end{align*} 
\end{enumerate}
Plugging all the terms together in Equation (\ref{eq:leib}), we have thus proven that for $\varepsilon>0$ small enough
\begin{equation}\label{eq:intfinite}
\left|\int_0^{+\infty} z^{n-\gamma-\varepsilon}  \E_{(0,y)}\left[A_T^{\frac{n+1}{2}}H_{n+1}(z \sqrt{A_T}) e^{-z (\alpha - \beta) X_T - \frac{z^2}{2} A_T } 1_{\{ \delta \sqrt{A_T}>  X_T\}}   \right] dz\right| <+\infty.
\end{equation}
We now study further this last expression, and assume, without loss of generality, that 
\begin{equation}\label{eq:delta}
\int_0^{+\infty} z^{n-\gamma-\varepsilon} H_{n+1}(z) e^{-\frac{z^2}{2}} dz >0.
\end{equation}
Applying the Fubini-Tonelli theorem and a change of variables, we have:
\begin{align*}
&\int_0^{+\infty}z^{n-\gamma-\varepsilon}\E_{(0,y)}\left[A_T^{\frac{n+1}{2}}H_{n+1}(z \sqrt{A_T}) 1_{\{H_{n+1}(z \sqrt{A_T})>0\}}e^{-z (\alpha - \beta) X_T - \frac{z^2}{2} A_T } 1_{\{ \delta \sqrt{A_T}>  X_T\}}  \right]  dz\\
&\qquad\qquad =\E_{(0,y)}\left[A_T^{\frac{\gamma+\varepsilon}{2}}1_{\{ \delta \sqrt{A_T}>  X_T\}} \int_0^{+\infty}z^{n-\gamma-\varepsilon}H_{n+1}(z ) 1_{\{H_{n+1}(z)>0\}}e^{-z (\alpha - \beta) \frac{X_T}{\sqrt{A_T}} - \frac{z^2}{2} }  dz  \right] \\
&\qquad\qquad \geq \E_{(0,y)}\left[A_T^{\frac{\gamma+\varepsilon}{2}}1_{\{ \delta \sqrt{A_T}>  X_T\}}  \right]  \int_0^{+\infty}z^{n-\gamma-\varepsilon}H_{n+1}(z ) 1_{\{H_{n+1}(z)>0\}}e^{-z (\alpha - \beta) \delta - \frac{z^2}{2} }  dz. 
\end{align*}
Similarly,
\begin{align*}
&\int_0^{+\infty}z^{n-\gamma-\varepsilon}\E_{(0,y)}\left[A_T^{\frac{n+1}{2}}|H_{n+1}(z \sqrt{A_T})| 1_{\{H_{n+1}(z \sqrt{A_T})<0\}}e^{-z (\alpha - \beta) X_T - \frac{z^2}{2} A_T } 1_{\{ \delta \sqrt{A_T}>  X_T\}}  \right]  dz\\
&\qquad\qquad \leq \E_{(0,y)}\left[A_T^{\frac{\gamma+\varepsilon}{2}}1_{\{ \delta \sqrt{A_T}>  X_T\}}  \right]  \int_0^{+\infty}z^{n-\gamma-\varepsilon}|H_{n+1}(z )| 1_{\{H_{n+1}(z)<0\}}e^{- \frac{z^2}{2} }  dz. 
\end{align*}
As a consequence, we deduce from (\ref{eq:intfinite}) that
$$ \E_{(0,y)}\left[A_T^{\frac{\gamma+\varepsilon}{2}}1_{\{ \delta \sqrt{A_T}>  X_T\}}  \right] \int_0^{+\infty} z^{n-\gamma-\varepsilon}  |H_{n+1}(z )| e^{-\frac{z^2}{2}} \left( 1_{\{H_{n+1}(z)>0\}} e^{-z (\alpha - \beta) \delta} -1_{\{H_{n+1}(z)<0\}} \right) dz<+\infty.$$
But, from (\ref{eq:delta}) the integral in $z$ is strictly positive for $\delta$ small enough. As a consequence, we have obtained that 
$$ \E_{(0,y)}\left[A_T^{\frac{\gamma+\varepsilon}{2}} 1_{\{ \delta \sqrt{A_T}>  X_T\}}  \right]<+\infty$$
and the result follows from the observation that
$$\E_{(0,y)}\left[A_T^{\frac{\gamma+\varepsilon}{2}}\right] \leq  \E_{(0,y)}\left[A_T^{\frac{\gamma+\varepsilon}{2}} 1_{\{ \delta \sqrt{A_T}>  X_T\}}  \right]  +  \frac{1}{\delta^{\gamma+\varepsilon}} \E_{(0,y)}\left[X_T^{\gamma+\varepsilon} 1_{\{ \delta \sqrt{A_T} \leq  X_T\}}  \right] <+\infty.$$
From Lemma \ref{lem:TAT}, this contradicts the definition of $\gamma$, hence we conclude that $\gamma=\theta$. \qed 

\subsubsection{The expectation of $T$}
We show in this section how to compute the expectation of $T$ (which is finite since $\alpha>\beta$, i.e. $\theta>1$) by using the martingale $(Y_t-X_t - (\alpha-\beta)t, \, t\geq0)$.
Let $n>0$ and recall the definition of the stopping time $\tau_n^Y = \inf\{t\geq0,\; Y_t =n\}$. From Doob's optional theorem:
$$\E_{(x,y)}\left[ Y_{t\wedge T\wedge \tau_n^Y} -X_{t\wedge T\wedge \tau_n^Y}\right] - (\alpha-\beta)\E_{(x,y)}\left[t\wedge T\wedge \tau_n^Y \right] = y-x.$$
Using the monotone and dominated convergence theorems, we deduce:
\begin{align*}
\E_{(x,y)}\left[ T \right] &= \frac{y-x}{\alpha-\beta} - \frac{1}{\alpha-\beta} \lim_{n\rightarrow +\infty}\E_{(x,y)}\left[ Y_{T\wedge \tau_n^Y} -X_{T\wedge \tau_n^Y}\right] \\
&= \frac{y-x}{\alpha-\beta} - \frac{1}{\alpha-\beta} \lim_{n\rightarrow +\infty}\E_{(x,y)}\left[ (n -X_{\tau_n^Y}) 1_{\{\tau_n^Y \leq T\}}\right] 
\end{align*}
and it remains to show that the limit equals 0. Let us take $0<\varepsilon < \theta-1$. Applying the Markov inequality together with the maximal inequality (\ref{eq:BDG}), we obtain: \begin{align*}
0\leq \E_{(x,y)}\left[ (n -X_{\tau_n^Y}) 1_{\{\tau_n^Y \leq T\}}\right] & \leq  n  \Pb_{(x,y)}\left(  \tau_n^Y\leq T\right)\\
 &= n \Pb_{(x,y)}\left(\sup_{s\leq T} Y_s \geq n  \right)\\
&\leq n^{-\varepsilon} \E_{(x,y)}\left[\sup_{s\leq T} Y_s^{1+\varepsilon}  \right] \leq  C_{\alpha, 1+\varepsilon}  n^{-\varepsilon} \E_{(x,y)}\left[(T+x)^{1+\varepsilon}\right] \xrightarrow[n\rightarrow +\infty]{}0
\end{align*}
since the last expectation is finite by the first part of the proof.\qed

\section{Proof of Corollary \ref{cor:1}}\label{sec:5}

Before proving Corollary  \ref{cor:1}, we first study the asymptotics of $X_T$.
Recall to this end that from Proposition \ref{prop:X_T} and from the analyticity of $F_{\alpha,\beta}$, there exists $m\in \N$ such that
\begin{equation}\label{eq:XTm}
\E_{(0,y)}\left[X_T^{\theta-s}\right] = \frac{(2y)^{\theta-s}}{(-s)^m G_{\alpha,\beta}(\theta-s)}
\end{equation}
where $G_{\alpha,\beta}$ is such that $G_{\alpha,\beta}(\theta)\neq0$.

\subsection{Asymptotics of the tail distribution of $X_T$}
 
\begin{lemma}\label{lem:AsympXT}
Let $\delta>0$. There exist two positive constants $\kappa_1$ and $\kappa_2$ such that
$$\frac{\kappa_1}{t^{\theta+\delta}} \leq \Pb_{(0,y)}\left(X_T> t\right) \leq \kappa_2 \frac{(\ln(t))^m}{t^{\theta}}, \qquad \text{as }t\rightarrow +\infty.$$
\end{lemma}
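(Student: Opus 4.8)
The plan is to read both inequalities off the explicit formula $\E_{(0,y)}[X_T^s]=(2y)^s/F_{\alpha,\beta}(s)$ of Proposition~\ref{prop:X_T}, valid for $s\in[0,\theta)$, together with the factorization $F_{\alpha,\beta}(s)=(s-\theta)^mG_{\alpha,\beta}(s)$ with $G_{\alpha,\beta}(\theta)\neq0$. Since $\theta$ is the first positive zero of $F_{\alpha,\beta}$ and $F_{\alpha,\beta}(0)=1$, the function $F_{\alpha,\beta}$ is positive on $[0,\theta)$, so writing $s=\theta-\varepsilon$ one has $F_{\alpha,\beta}(\theta-\varepsilon)=(-1)^m\varepsilon^m G_{\alpha,\beta}(\theta-\varepsilon)>0$, and by continuity there exist $\varepsilon_0,c_0>0$ with $F_{\alpha,\beta}(\theta-\varepsilon)\geq c_0\varepsilon^m$ for $0<\varepsilon<\varepsilon_0$.

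\noindent\emph{Upper bound.} I would simply use Markov's inequality: for $0\leq s<\theta$ and $t>0$,
$$\Pb_{(0,y)}(X_T>t)\leq t^{-s}\,\E_{(0,y)}[X_T^s]=\frac{(2y)^s}{t^s\,F_{\alpha,\beta}(s)}.$$
Taking $s=\theta-\varepsilon$ with $\varepsilon\in(0,\varepsilon_0)$, bounding $(2y)^{\theta-\varepsilon}\leq C_y:=\max(1,(2y)^\theta)$ and $F_{\alpha,\beta}(\theta-\varepsilon)\geq c_0\varepsilon^m$, gives $\Pb_{(0,y)}(X_T>t)\leq (C_y/c_0)\,t^{\varepsilon-\theta}\varepsilon^{-m}$. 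For $t$ large enough that $1/\ln t<\varepsilon_0$, the choice $\varepsilon=1/\ln t$ turns $t^{\varepsilon}$ into $e$ and $\varepsilon^{-m}$ into $(\ln t)^m$, hence $\Pb_{(0,y)}(X_T>t)\leq\kappa_2(\ln t)^m/t^\theta$ with $\kappa_2=eC_y/c_0$.

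\noindent\emph{Lower bound.} Here I would argue by contradiction, using that $F_{\alpha,\beta}(\theta)=0$ forces $\E_{(0,y)}[X_T^{\theta-\varepsilon}]=(2y)^{\theta-\varepsilon}/F_{\alpha,\beta}(\theta-\varepsilon)\to+\infty$ as $\varepsilon\downarrow0$. Suppose, for some $\delta>0$ and some $t_0\geq1$, that $\Pb_{(0,y)}(X_T>t)\leq\kappa_1 t^{-(\theta+\delta)}$ for all $t\geq t_0$. Writing $\E_{(0,y)}[X_T^{\theta-\varepsilon}]=(\theta-\varepsilon)\int_0^\infty v^{\theta-\varepsilon-1}\Pb_{(0,y)}(X_T>v)\,dv$ and splitting at $t_0$, the part over $[0,t_0]$ contributes at most $t_0^{\theta}$, while over $[t_0,\infty)$ the assumed bound gives at most $\kappa_1(\theta-\varepsilon)\int_{t_0}^\infty v^{-\varepsilon-\delta-1}\,dv\leq\kappa_1\theta/\delta$. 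Thus $\E_{(0,y)}[X_T^{\theta-\varepsilon}]$ would stay bounded as $\varepsilon\downarrow0$, a contradiction. Hence no such eventual bound can hold, and since $t\mapsto\Pb_{(0,y)}(X_T>t)$ is non-increasing one extracts a constant $\kappa_1>0$ for which $\Pb_{(0,y)}(X_T>t)\geq\kappa_1 t^{-(\theta+\delta)}$ for all large $t$.

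\noindent I expect the lower bound to be the delicate step: the closed form of the Mellin transform is available only on the open interval $[0,\theta)$, so the tail at the critical exponent must be recovered from the \emph{rate} at which $\E_{(0,y)}[X_T^{\theta-\varepsilon}]$ blows up as $\varepsilon\downarrow0$ (a Tauberian-type input) rather than from any moment at $\theta$ itself, and the monotonicity of the tail is what converts the resulting non-quantitative statement into a pointwise lower bound valid for all large $t$. The upper bound, by contrast, is a routine optimization of Markov's inequality, the only mild point being the calibration $\varepsilon=1/\ln t$ that produces the $(\ln t)^m$ factor from the order-$m$ pole of $1/F_{\alpha,\beta}$ at $\theta$.
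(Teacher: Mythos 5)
Your upper bound is correct and is essentially the paper's argument (Markov's inequality at $s=\theta-1/\ln t$, with the order-$m$ zero of $F_{\alpha,\beta}$ at $\theta$ producing the $(\ln t)^m$ factor). The problem is the lower bound: the contradiction argument only shows that for every $\kappa_1$ and $t_0$ the bound $\Pb_{(0,y)}(X_T>t)\leq \kappa_1 t^{-(\theta+\delta)}$ fails for \emph{some} $t\geq t_0$, i.e. $\limsup_{t\to\infty}t^{\theta+\delta}\Pb_{(0,y)}(X_T>t)=+\infty$. The last sentence, where monotonicity of the tail is invoked to ``extract a constant $\kappa_1$ valid for all large $t$'', does not follow: from a bound along a sequence $t_n$, monotonicity only transfers the estimate to intervals of the form $\bigl[t_n^{(\theta+\delta')/(\theta+\delta)},\,t_n\bigr]$, and nothing in your argument prevents the sequence $t_n$ from being so sparse (say tower-like) that these intervals fail to cover a neighbourhood of infinity. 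The gap is not cosmetic: the only inputs you use are the divergence of $\E_{(0,y)}[X_T^{\theta-\varepsilon}]$ as $\varepsilon\downarrow0$ and monotonicity of the tail, and these two facts alone do not imply the stated conclusion. For instance, a decreasing tail coming from mass $\approx t_n^{-\theta}$ placed at a very sparse sequence $t_n\to\infty$ has $\int_0^{\infty}v^{\theta-\varepsilon-1}\Pb(X_T>v)\,dv\to\infty$ as $\varepsilon\downarrow0$, yet $\liminf_{t\to\infty}t^{\theta+\delta}\Pb(X_T>t)=0$ for every $\delta$ small. So the eventual lower bound genuinely requires the \emph{rate} of blow-up, not just its occurrence.

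This is exactly where the paper's proof differs. It keeps the quantitative information that $\E_{(0,y)}[X_T^{\theta-s}]\sim C s^{-m}$ as $s\downarrow0$ (pole of order $m$), rewrites the Mellin transform as a Laplace transform of $z\mapsto e^{\theta z}\Pb_{(0,y)}(X_T>e^z)$, and applies Karamata's Tauberian theorem to obtain the two-sided asymptotics
$$\int_0^{z}t^{\theta-1}\,\Pb_{(0,y)}(X_T>t)\,dt \;\equi_{z\rightarrow+\infty}\; c\,(\ln z)^m .$$
The pointwise bound then comes from sandwiching:
$$\frac{1}{\theta}\,z^{\theta(1+\delta)}\,\Pb_{(0,y)}(X_T>z)\;\geq\;\int_z^{z^{1+\delta}}t^{\theta-1}\,\Pb_{(0,y)}(X_T>t)\,dt\;\geq\;\bigl(c(1-\varepsilon)(1+\delta)^m-c(1+\varepsilon)\bigr)(\ln z)^m,$$
which is positive (indeed tends to infinity) for all large $z$, hence a lower bound valid for every large $t$ after renaming $\delta$. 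If you want to repair your proof, replace the contradiction step by this Tauberian argument (or any argument that converts the $s^{-m}$ blow-up of the moments into regular variation of the integrated tail); your own closing remark about needing ``a Tauberian-type input'' identifies precisely the ingredient your written argument omits.
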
  

\begin{proof}
The upper bound is a direct consequence of the Markov inequality, for $t\geq2$, 
$$\Pb_{(0,y)}\left(X_T>t\right) \leq  \frac{\E_{(0,y)}\left[X_T^{\theta-\frac{1}{\ln(t)}}\right]}{t^{\theta-\frac{1}{\ln(t)}}} = \frac{ e}{t^\theta}\E_{(0,y)}\left[X_T^{\theta-\frac{1}{\ln(t)}}\right]$$
together with Formula (\ref{eq:XTm})
\begin{equation}\label{eq:XTln}
\E_{(0,y)}\left[X_T^{\theta-\frac{1}{\ln(t)}}\right] = \frac{ (2y)^{\theta-\frac{1}{\ln(t)}}}{(-1)^m G_{\alpha,\beta}(\theta-\frac{1}{\ln(t)})}(\ln(t))^m.
\end{equation}
To get the lower bound, we shall write the Mellin transform of $X_T$ as a Laplace transform:
\begin{align*}
\frac{1}{\theta-s} \E_{(0,y)}[X_T^{\theta-s}] &=  \int_0^{+\infty}  z^{\theta-s+1} \Pb_{(0,y)}\left(X_T> z\right) dz\\
&=  \int_{-\infty}^{+\infty}   e^{-s z }e^{\theta z} \Pb_{(0,y)}\left(X_T >e^z\right) dz \\
& =  \int_{-\infty}^{0}   e^{-s z }e^{\theta z} \Pb_{(0,y)}\left(X_T >e^z\right) dz  +  \int_{0}^{+\infty}   e^{-s z }e^{\theta z} \Pb_{(0,y)}\left(X_T >e^z\right) dz. 
\end{align*}
We now let $s\downarrow 0$. Applying the monotone convergence theorem, the first integral on the right-hand side converges towards $ \int_{-\infty}^{0} e^{\theta z} \Pb_{(0,y)}\left(X_T >e^z\right) dz  \leq \dfrac{1}{\theta}$. As a consequence, applying  Karamata's Tauberian theorem \cite[Theorem 1.7.1]{BGT}, we deduce from  (\ref{eq:XTm}) that there exists a constant $c>0$ such that 
$$    \int_0^z e^{\theta u} \Pb_{(0,y)}\left(X_T > e^u\right) du  \equi_{z\rightarrow +\infty} c z^m $$
 i.e., going back to the original variable
$$  \int_0^z  t^{\theta-1}  \Pb_{(0,y)}\left(X_T > t\right)dt  \equi_{z\rightarrow +\infty}  c \left(\ln(z)\right)^m.$$
We now fix $\delta>0$ and take $\varepsilon>0$ small enough such that $c(1-\varepsilon)(1+\delta)^m - c(1+\varepsilon)>0$. Then, for $z>0$ large enough, we have 
\begin{align*}
\frac{1}{\theta}   z^{\theta(1+\delta)}  \Pb_{(0,y)}\left(X_T> z\right)& \geq\int_z^{z^{1+\delta}}  t^{\theta-1}  \Pb_{(0,y)}\left(X_T >t\right)dt\\
& \geq  c(1-\varepsilon) \left(\ln(z^{1+\delta})\right)^m - c(1+\varepsilon) \left(\ln(z)\right)^m\\
&\geq \big(c(1-\varepsilon)(1+\delta)^m - c(1+\varepsilon) \big) \left(\ln(z)\right)^m\xrightarrow[z\rightarrow+\infty]{}+\infty
\end{align*}
which yields the lower bound of Lemma \ref{lem:AsympXT}.
\end{proof}

\subsection{Proof of Corollary \ref{cor:1}}

Observe first that applying the Markov inequality and the maximal inequality (\ref{eq:Ped}), there exists a constant $K>0$ such that for all $t\geq 2$,
$$
\Pb_{(0,y)}\left(T>t\right) \leq \frac{\E_{(0,y)}\left[T^{\theta- \frac{1}{\ln(t)}}\right]}{t^{\theta-\frac{1}{\ln(t)}}} \leq K \frac{\E_{(0,y)}\left[X_T^{\theta-\frac{1}{\ln(t)}}\right]}{t^{\theta-\frac{1}{\ln(t)}}} = \frac{K e}{t^\theta}\E_{(0,y)}\left[X_T^{\theta-\frac{1}{\ln(t)}}\right]
$$
and the upper bound follows as above from (\ref{eq:XTln}). 
Then, taking $\varepsilon>0$ small enough, we have 
$$
 \mathbb{P}_{(0,y)}(X_T > t) \leq \mathbb{P}_{(0,y)}\left( \sup_{[0,T]} X_s > t \right) \leq \mathbb{P}_{(0,y)}(T > t^{1 - \varepsilon}) + \mathbb{P}_{(0,y)}\left( \sup_{[0, t^{1 - \varepsilon}]} X_s > t \right).
 $$
By scale invariance, the last term on the right-hand side equals $\mathbb{P}_{(0,y)}\left( \sup_{[0,1]} X_s > t^\varepsilon \right) $ which decreases exponentially since $\sup_{[0,1]} X_s $ admits exponential moments, see \cite{Eis}.
As a consequence, we deduce from Lemma \ref{lem:AsympXT} that there exists a constant $c_1>0$ such that
$$\frac{c_1}{t^{\frac{\theta+\delta}{1-\varepsilon}}}  \leq \Pb_{(0,y)}\left( T>t\right)\qquad \text{ as }t\rightarrow +\infty.$$
This is the lower bound of Corollary \ref{cor:1}, after renaming the constants.\qed

\section{The limit of $\theta(\alpha,\beta)$ as $\alpha\downarrow0$.}\label{sec:limalpha}

When $\alpha=0$, the process $X$ is absorbed at 0. As a consequence, as observed in Figure 2, different situations occur,  according as whether $Y$ may or may not reach the level 0.
Typically, when $\beta\geq 2$, the process $Y$ cannot reach 0, so that $\Pb_{(x,y)}(T=+\infty)>0$, and $\lim_{\alpha\downarrow0}\theta(\alpha,\beta)=0$. Conversely, when $\beta<2$, the process $Y$ reaches 0 a.s., and thus the random variable $X_T$ remains well-defined, but its distribution admits an atom at 0 (on the set $\{T\geq \tau^X_0\}$). We compute below the limit of $\theta(\alpha,\beta)$ as $\alpha\downarrow0$ in these different regimes. Note that this limit necessarily exists since $\theta(\alpha,\beta)$ is non-decreasing in $\alpha$.

\begin{proposition}
The following limits hold as $\alpha\downarrow0$ :
$$
\begin{array}{ll}
\theta(\alpha,\beta) \equi\alpha / (2^{\frac{\beta}{2}}-2) &\qquad \text{if }\beta>2,\\
\theta(\alpha,\beta) \equi  \sqrt{\frac{\alpha}{2\ln(2)}}&\qquad \text{if }\beta=2,\\
\theta(\alpha,\beta) = 1-\frac{\beta}{2} +\alpha \left( \frac{1}{2-2^{\beta/2}}-\frac{1}{2} \right) +  \emph{o}(\alpha)&\qquad \text{if }0\leq \beta<2.\\
\end{array}$$
\end{proposition}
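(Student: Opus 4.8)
The plan is to rewrite the equation $F_{\alpha,\beta}(\theta)=0$ as an implicit relation in which the smallness of $\alpha$ is transparent, by factoring out the bottom parameter $\alpha/2$. Put $a=a(\alpha,s):=\frac{\alpha+\beta}{2}-1+s$. For $n\ge1$ one has $(a)_n=a\,(a+1)_{n-1}$, $(\tfrac{\alpha}{2})_n=\tfrac{\alpha}{2}(1+\tfrac{\alpha}{2})_{n-1}$ and $(-s)_n=-s\,(1-s)_{n-1}$; inserting these into the series of $F_{\alpha,\beta}$ and reindexing $n=m+1$ (using $(m+1)!=(2)_m$) gives, for every $s\in\R$,
\begin{equation*}
F_{\alpha,\beta}(s)=1-\frac{a\,s}{\alpha}\,\Psi_\alpha(s),\qquad
\Psi_\alpha(s):=\pFq{3}{2}{1\quad a+1\quad 1-s}{1+\tfrac{\alpha}{2}\quad 2}{\tfrac12}.
\end{equation*}
Hence $\theta=\theta(\alpha,\beta)$ satisfies
\begin{equation*}
\Big(\tfrac{\alpha+\beta}{2}-1+\theta\Big)\,\theta\,\Psi_\alpha(\theta)=\alpha.\tag{$\star$}
\end{equation*}
The ratio of consecutive terms of $\Psi_\alpha(s)$ tends to $\tfrac12$, so the series converges locally uniformly in $(\alpha,s)\in[0,\infty)\times\R$ and $\Psi$ is jointly continuous; moreover $\Psi_\alpha(s)>0$ for $0\le s\le1$ (every Pochhammer is then nonnegative), hence also for $s$ slightly above $1$ by continuity — the situation needed when $\beta=0$. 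In particular $\Psi_0(s)=\pFq{2}{1}{\tfrac{\beta}{2}+s\quad 1-s}{2}{\tfrac12}$, and the classical evaluation $\pFq{2}{1}{1\quad b}{2}{z}=\frac{(1-z)^{1-b}-1}{z(b-1)}$ gives
\begin{equation*}
\Psi_0(0)=\Psi_0\!\left(1-\tfrac{\beta}{2}\right)=\pFq{2}{1}{1\quad \tfrac{\beta}{2}}{2}{\tfrac12}=
\begin{cases}\dfrac{2^{\beta/2}-2}{\beta/2-1}&\text{if }\beta\ne2,\\[1.5mm] 2\ln 2&\text{if }\beta=2.\end{cases}
\end{equation*}

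With $(\star)$ at hand, the next step is to identify $s_0:=\lim_{\alpha\downarrow0}\theta(\alpha,\beta)$, which exists by the monotonicity of $\theta$ in $\alpha$ (Remark \ref{rem:Tab}) and is finite. Letting $\alpha\downarrow0$ in $(\star)$, the right-hand side vanishes while the left-hand side tends to $a(0,s_0)\,s_0\,\Psi_0(s_0)$ by joint continuity; since $\Psi_0(s_0)>0$, this forces $a(0,s_0)\,s_0=0$, i.e. $s_0=0$ or $s_0=1-\tfrac{\beta}{2}$. When $\beta>2$ the value $1-\tfrac{\beta}{2}$ is negative so $s_0=0$; when $\beta=2$ the two possibilities coincide and $s_0=0$; when $0\le\beta<2$ the bound $\theta>1-\tfrac{\beta}{2}$ (from Section \ref{sec:4}, valid as soon as $\alpha\le\beta$, and from $\theta>1$ in the degenerate case $\beta=0$) excludes $s_0=0$, so $s_0=1-\tfrac{\beta}{2}$.

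Finally, write $\theta=s_0+\delta(\alpha)$ with $\delta(\alpha)\to0$ and feed this back into $(\star)$, using $\theta\,\Psi_\alpha(\theta)\to s_0\,\Psi_0(s_0)$. If $\beta>2$, then $a(\alpha,\theta)\to\tfrac{\beta}{2}-1>0$ and $\theta=\delta\to0$, so $(\star)$ reads $(\tfrac{\beta}{2}-1)\,\theta\,\Psi_0(0)\,(1+o(1))=\alpha$; since $(\tfrac{\beta}{2}-1)\Psi_0(0)=2^{\beta/2}-2$ this yields $\theta\sim\alpha/(2^{\beta/2}-2)$. If $\beta=2$, then $a(\alpha,\theta)=\tfrac{\alpha}{2}+\theta$ and $\Psi_\alpha(\theta)\to2\ln2$, so $\theta(\tfrac{\alpha}{2}+\theta)=\tfrac{\alpha}{2\ln2}(1+o(1))$; an elementary squeeze gives $\theta\asymp\sqrt\alpha$, hence $\tfrac{\alpha}{2}\theta=o(\alpha)$ and $\theta\sim\sqrt{\alpha/(2\ln2)}$. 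If $0\le\beta<2$, then $a(\alpha,\theta)=\tfrac{\alpha}{2}+\delta$ and $s_0\Psi_0(s_0)=(1-\tfrac{\beta}{2})\cdot\frac{2^{\beta/2}-2}{\beta/2-1}=2-2^{\beta/2}>0$, so $(\star)$ becomes $(\tfrac{\alpha}{2}+\delta)(2-2^{\beta/2})(1+o(1))=\alpha$, whence $\delta=\alpha\big(\tfrac{1}{2-2^{\beta/2}}-\tfrac12\big)+o(\alpha)$, which is the announced expansion. The main obstacle is the bookkeeping of the first step — checking the rearrangement into $(\star)$ for all $s$ and establishing the joint continuity and positivity of $\Psi_\alpha$ near the moving point $(\alpha,\theta(\alpha))$, in particular up to (and, for $\beta=0$, slightly past) $s=1$ — together with the elementary squeeze in the case $\beta=2$ that is needed to pin down the order $\theta\asymp\sqrt\alpha$ before the constant can be read off.
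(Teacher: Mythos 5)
Your argument is correct and follows essentially the same route as the paper: your relation $(\star)$ is precisely the paper's rewriting (\ref{eq:Ftheta}) of $F_{\alpha,\beta}(\theta)=0$ with the factor $\tfrac{\alpha}{2}$ pulled out of the bottom Pochhammer symbol, the limit of $\theta$ is identified in the same way (monotonicity in $\alpha$ together with the bound $\theta>1-\tfrac{\beta}{2}$ when $\beta<2$, resp.\ $\theta>1$ when $\beta=0$), and the three expansions come from the same limiting sums, which you merely package as closed-form values of a ${}_2F_1$ instead of evaluating the limiting series directly as the paper does. The differences (the ${}_3F_2$ bookkeeping, the explicit squeeze for $\beta=2$, the continuity/positivity remarks about $\Psi_\alpha$) are cosmetic, so there is nothing substantive to add.
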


\begin{proof}

By definition, $\theta(\alpha,\beta)$ is a solution of the equation:
    \begin{align}
  \notag  0 = F_{\alpha, \beta}(\theta(\alpha, \beta)) &= 1+ \sum_{n=1}^{+\infty}  \frac{\left(\frac{\alpha+\beta}{2} - 1 + \theta(\alpha,\beta)\right)_n  (-\theta(\alpha,\beta)) (1-\theta(\alpha,\beta))_{n-1} }{\frac{\alpha}{2}\left(\frac{\alpha}{2}+1\right)_{n-1} n!} \left(\frac{1}{2}\right)^n\\
\label{eq:Ftheta}&=1-2 \frac{\theta(\alpha,\beta)}{\alpha}  \sum_{n=1}^{+\infty}  \frac{\left(\frac{\alpha+\beta}{2} - 1 + \theta(\alpha,\beta)\right)_n (1-\theta(\alpha,\beta))_{n-1} }{\left(\frac{\alpha}{2}+1\right)_{n-1} n!} \left(\frac{1}{2}\right)^n.
\end{align}
Also, recall that $\theta(\alpha,\beta) \in (0,1)$ for $\alpha<\beta$. We deal with each case $\beta>2$, $\beta=2$ and $\beta<2$ separately.

\begin{enumerate}
\item When $\beta>2$, we have $\lim\limits_{\alpha\downarrow0}\theta(\alpha,\beta)=0$. Passing to the limit in (\ref{eq:Ftheta}), we deduce that the sum converges towards
$$\sum_{n=1}^{+\infty}  \frac{\left(\frac{\beta}{2} - 1\right)_n }{ n! } \left(\frac{1}{2}\right)^n = 2^{\frac{\beta}{2}-1} -1$$
and thus 
$$\theta(\alpha,\beta) \equi_{\alpha\rightarrow 0}  \frac{\alpha}{2^{\frac{\beta}{2}}-2}.  $$
Note that when $\beta =4$, we obtain $\theta(\alpha,4) \equi_{\alpha\downarrow 0} \dfrac{\alpha}{2}$ which is consistent with the explicit value $\theta(\alpha, 4-\alpha) = \alpha/2$ obtained in the case $\alpha+\beta=4$.\\
\item When $\beta=2$, we still have $\lim\limits_{\alpha\downarrow0}\theta(\alpha,\beta)=0$. Equation (\ref{eq:Ftheta}) then reads
$$ 0 = 1-2 \frac{\theta(\alpha,2)\left(\frac{\alpha}{2}  + \theta(\alpha,2)\right)  }{\alpha}  \sum_{n=1}^{+\infty}  \frac{\left(\frac{\alpha}{2}  + \theta(\alpha,2)+1\right)_{n-1} (1-\theta(\alpha,2))_{n-1} }{\left(\frac{\alpha}{2}+1\right)_{n-1} n!} \left(\frac{1}{2}\right)^n.$$
As before, passing to the limit as $\alpha\downarrow0$, the sum converges towards
$$\sum_{n=1}^{+\infty}  \frac{ 1 }{n} \left(\frac{1}{2}\right)^n=\ln(2)$$
and we obtain
$$0 = 1 - 2\ln(2)  \lim_{\alpha\downarrow0}\frac{\theta^2(\alpha,2)}{\alpha} $$
i.e.
$$\theta(\alpha,2) \equi_{\alpha\downarrow 0}  \sqrt{\frac{\alpha}{2\ln(2)}}.$$
\item When $0\leq \beta<2$, then $Y$ will hit 0 a.s. and we have $T\leq \tau_0^{Y}$. As explained in Subsection \ref{sec:4}, this implies that $\theta(\alpha,\beta)> 1-\frac{\beta}{2}$. Let us set $\theta(\alpha,\beta)=1-\frac{\beta}{2} + \rho(\alpha,\beta)$ so that Equation (\ref{eq:Ftheta}) becomes
\begin{equation}\label{eq:=0}
0=1-2\left( \frac{\frac{\alpha}{2} + \rho(\alpha,\beta)}{\alpha}\right)\theta(\alpha,\beta)  \sum_{n=1}^{+\infty}  \frac{\left(\frac{\alpha}{2} + \rho(\alpha,\beta)+1\right)_{n-1} (1-\theta(\alpha,\beta))_{n-1} }{\left(\frac{\alpha}{2}+1\right)_{n-1} n!} \left(\frac{1}{2}\right)^n.
\end{equation}
Passing to the limit as $\alpha\downarrow0$, the sum converges towards 
$$\sum_{n=1}^{+\infty}  \frac{\left( \rho(0^+,\beta)+1\right)_{n-1} (1-\theta(0^+,\beta))_{n-1} }{(n-1)! n!} \left(\frac{1}{2}\right)^n>0.$$
As a consequence, going back to (\ref{eq:=0}), we necessarily have $\rho(0^+,\beta)=0$ and the sum equals 
$$\sum_{n=1}^{+\infty}  \frac{\left(\frac{\beta}{2}\right)_{n-1} }{n!} \left(\frac{1}{2}\right)^n =  \frac{1}{\frac{\beta}{2}-1} \sum_{n=1}^{+\infty}  \frac{\left(\frac{\beta}{2}-1\right)_{n}}{n!} \left(\frac{1}{2}\right)^n  =   \frac{2}{\beta-2}  \left(  2^{\frac{\beta}{2}-1}-1\right)$$
so finally
$$\rho(\alpha,\beta) \equi_{\alpha\downarrow 0} \alpha \left( \frac{1}{2-2^{\beta/2}}-\frac{1}{2} \right).$$
\end{enumerate}

\end{proof}

\section{Appendix}

We write down a formula which is used several times in the paper. This identity may also be recovered from \cite[p.329, Formula 3.259 (3)]{GrRy}.
\begin{lemma}\label{lem:app}
Assume that $\lambda>0$, $\mu>0$  and $ 0<\gamma < \lambda+\mu$. Then:
\begin{multline*}
\int_0^{+\infty} \xi^{\gamma-1} (1-2\i \xi)^{-\lambda} (1+2\i\xi )^{-\mu} d\xi\\
=\frac{\Gamma(\gamma) \Gamma(\mu+\lambda-\gamma)}{\Gamma(\lambda)\Gamma(\mu)} \left((2\i)^{-\gamma} B\left(\lambda, 1-\gamma  \right)2^{-\lambda} \pFq{2}{1}{1-\mu \quad\lambda}{\lambda -\gamma + 1 }{\frac{1}{2}} \right.\\
\left.+ (-2\i)^{-\gamma}B\left( \mu, 1-\gamma \right)  2^{-\mu} \pFq{2}{1}{1-\lambda \quad\mu}{\mu -\gamma + 1 }{\frac{1}{2}}\right).
\end{multline*}
\end{lemma}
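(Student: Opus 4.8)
The plan is to reduce the integral to classical Euler integrals for ${}_2F_1$ by a double Laplace representation. Since $\Re(1\mp2\i\xi)=1>0$, I would first write $(1-2\i\xi)^{-\lambda}=\frac{1}{\Gamma(\lambda)}\int_0^{+\infty}u^{\lambda-1}e^{-u(1-2\i\xi)}\,du$ and $(1+2\i\xi)^{-\mu}=\frac{1}{\Gamma(\mu)}\int_0^{+\infty}v^{\mu-1}e^{-v(1+2\i\xi)}\,dv$. The resulting triple integral is not absolutely convergent in the $\xi$ variable, so I would insert a regulariser $e^{-\varepsilon\xi}$, apply Fubini, and only afterwards let $\varepsilon\downarrow0$: the $\xi$-integral then equals $\int_0^{+\infty}\xi^{\gamma-1}e^{-(\varepsilon-2\i(u-v))\xi}\,d\xi=\Gamma(\gamma)(\varepsilon-2\i(u-v))^{-\gamma}$, which converges to $\Gamma(\gamma)(2|u-v|)^{-\gamma}e^{\i\frac{\pi}{2}\gamma\,\sgn(u-v)}$ as $\varepsilon\downarrow0$. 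Since $|\varepsilon-2\i(u-v)|^{-\gamma}\le(2|u-v|)^{-\gamma}$, which is integrable against $u^{\lambda-1}v^{\mu-1}e^{-u-v}$ precisely when $0<\gamma<1$ and $\gamma<\lambda+\mu$, dominated convergence lets me pass to the limit on both sides. Assuming first $0<\gamma<\min(1,\lambda+\mu)$, this gives
$$\int_0^{+\infty}\xi^{\gamma-1}(1-2\i\xi)^{-\lambda}(1+2\i\xi)^{-\mu}\,d\xi=\frac{\Gamma(\gamma)}{\Gamma(\lambda)\Gamma(\mu)\,2^{\gamma}}\int_0^{+\infty}\!\!\int_0^{+\infty}u^{\lambda-1}v^{\mu-1}e^{-u-v}\,|u-v|^{-\gamma}e^{\i\frac{\pi}{2}\gamma\,\sgn(u-v)}\,du\,dv.$$

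I would then split the $(u,v)$-domain along $\{u=v\}$. On $\{u>v\}$ the substitution $v=uw$, $w\in(0,1)$, factorises the integral: the $u$-integral produces $\Gamma(\lambda+\mu-\gamma)(1+w)^{-(\lambda+\mu-\gamma)}$ (this is where $\gamma<\lambda+\mu$ is used), leaving $\Gamma(\lambda+\mu-\gamma)\int_0^1 w^{\mu-1}(1-w)^{-\gamma}(1+w)^{-(\lambda+\mu-\gamma)}\,dw$; on $\{u<v\}$ one obtains the same expression with $\lambda$ and $\mu$ interchanged. Each of these $w$-integrals is an Euler representation, $\int_0^1 w^{\mu-1}(1-w)^{-\gamma}(1+w)^{-(\lambda+\mu-\gamma)}\,dw=B(\mu,1-\gamma)\,{}_2F_1(\lambda+\mu-\gamma,\mu;\mu+1-\gamma;-1)$, and the Pfaff transformation ${}_2F_1(a,b;c;z)=(1-z)^{-b}\,{}_2F_1(c-a,b;c;\tfrac{z}{z-1})$ with $z=-1$ turns it into $B(\mu,1-\gamma)\,2^{-\mu}\,{}_2F_1(1-\lambda,\mu;\mu-\gamma+1;\tfrac12)$. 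Collecting the two pieces and absorbing the phases $e^{\pm\i\frac{\pi}{2}\gamma}$ together with the factor $2^{-\gamma}$ into $(\mp2\i)^{-\gamma}$ (consistent with the principal branch since $\mp2\i=2e^{\mp\i\pi/2}$) yields the stated identity for $0<\gamma<\min(1,\lambda+\mu)$. To reach the full range $0<\gamma<\lambda+\mu$, I would invoke analytic continuation in $\gamma$: the left-hand side is analytic on $\{0<\Re\gamma<\lambda+\mu\}$ (its integrand being dominated by $\xi^{\Re\gamma-1}(1+4\xi^2)^{-(\lambda+\mu)/2}$), the right-hand side is meromorphic there, and the two agree on an open subinterval.

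I expect the main obstacle to be twofold. First, the hypergeometric bookkeeping in the second step: keeping track of the Euler representation and the Pfaff transformation so that the two boundary terms land on exactly the asserted ${}_2F_1$'s, with the correct powers of $2$ and $\i$ and the correct interchange $\lambda\leftrightarrow\mu$. Second, the care required in the first step to legitimately interchange the three integrations — the naive application of Fubini fails because $\int_0^{+\infty}\xi^{\gamma-1}\,d\xi$ diverges — which forces the $e^{-\varepsilon\xi}$ regularisation together with the two dominated-convergence arguments as $\varepsilon\downarrow0$.
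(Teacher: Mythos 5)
Your proposal is correct and follows essentially the same route as the paper: the double Gamma-integral representation of $(1\mp2\i\xi)^{-\lambda},(1\pm2\i\xi)^{-\mu}$, the split along the diagonal $\{u=v\}$, reduction to the Euler integral $\int_0^1 w^{\mu-1}(1-w)^{-\gamma}(1+w)^{\gamma-\lambda-\mu}dw={}_2F_1$ at argument $-1$, and the Pfaff transformation to argument $\tfrac12$, with matching phases $(\pm2\i)^{-\gamma}$. The only difference is that you make explicit the $e^{-\varepsilon\xi}$ regularisation, dominated convergence, and the analytic continuation in $\gamma$ beyond $(0,1)$, steps the paper carries out formally; this is a welcome tightening rather than a different argument.
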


\begin{proof}
We start by writing the left-hand side under the form 
\begin{multline*}
\int_0^{+\infty} \xi^{\gamma-1} (1-2\i \xi)^{-\lambda} (1+2\i\xi )^{-\mu} d\xi\\=  \frac{1}{\Gamma(\lambda)\Gamma(\mu)} \int_0^{+\infty} \xi^{\gamma-1}  \left(\int_0^{+\infty}  a^{\lambda-1} e^{- a (1-2\i\xi)} da\right) \left(\int_0^{+\infty} b^{\mu-1} e^{-b(1+2\i \xi)} db\right)d\xi.
\end{multline*}
Assume first that $a<b$:
\begin{align*}
&\frac{1}{\Gamma(\lambda)\Gamma(\mu)} \int_0^{+\infty} \xi^{\gamma-1}  \int_0^{+\infty}  b^{\mu-1} e^{- b (1+2\i\xi)} \int_0^b  a^{\lambda-1} e^{-a(1-2\i \xi)} db da d\xi\\
&\qquad=\frac{\Gamma(\gamma)}{\Gamma(\lambda)\Gamma(\mu)}  \int_0^{+\infty}  (2\i (b-a))^{-\gamma}  b^{\mu-1} e^{- b } \int_0^b  a^{\lambda-1} e^{-a} da db \\
&\qquad=\frac{\Gamma(\gamma) (2\i)^{-\gamma}}{\Gamma(\lambda)\Gamma(\mu)}  \int_0^{+\infty}   (1-a)^{-\gamma}  b^{\mu+\lambda-\gamma- 1} e^{- b } \int_0^1  a^{\lambda-1} e^{-ab} da db \\
&\qquad=\frac{\Gamma(\gamma) (2\i)^{-\gamma}}{\Gamma(\lambda)\Gamma(\mu)} \Gamma(\mu+\lambda-\gamma)    \int_0^1  (1-a)^{-\gamma} (1+a)^{\gamma-\lambda-\mu}    a^{\lambda-1}  da \\
&\qquad=\frac{\Gamma(\gamma) (2\i)^{-\gamma}}{\Gamma(\lambda)\Gamma(\mu)} \Gamma(\mu+\lambda-\gamma)  B\left(\lambda, 1-\gamma  \right)\pFq{2}{1}{\mu+\lambda-\gamma\quad\lambda}{\lambda -\gamma + 1 }{-1}
\end{align*}
Recalling  the Pfaff transformation 
$$\pFq{2}{1}{\mu+\lambda-\gamma\quad\lambda}{\lambda -\gamma + 1 }{-1}= 2^{-\lambda} \pFq{2}{1}{1-\mu \quad\lambda}{\lambda -\gamma + 1 }{\frac{1}{2}}$$
yields the first term on the right-hand side. Similarly, when $b<a$, we obtain:
\begin{align*}
&\frac{1}{\Gamma(\lambda)\Gamma(\mu)} \int_0^{+\infty} \xi^{\gamma-1}  \int_0^{+\infty}  b^{\mu-1} e^{- b (1+2\i\xi)} \int_b^{+\infty}  a^{\lambda-1} e^{-a(1-2\i \xi)} db da d\xi\\
&\qquad=\frac{\Gamma(\gamma) (-2\i)^{-\gamma}}{\Gamma(\lambda)\Gamma(\mu)} \Gamma(\mu+\lambda-\gamma)    \int_1^{+\infty}  (a-1)^{-\gamma} (1+a)^{\gamma-\lambda-\mu}    a^{\lambda-1}  da \\
&\qquad=\frac{\Gamma(\gamma) (-2\i)^{-\gamma}}{\Gamma(\lambda)\Gamma(\mu)} \Gamma(\mu+\lambda-\gamma)    \int_0^{1}  b^{\mu-1} (1-b)^{-\gamma} (1+b)^{\gamma-\lambda-\mu}
db \\
&\qquad=\frac{\Gamma(\gamma) (-2\i)^{-\gamma}}{\Gamma(\lambda)\Gamma(\mu)} \Gamma(\mu+\lambda-\gamma) B\left( \mu, 1-\gamma \right)  \pFq{2}{1}{\mu+\lambda-\gamma\quad\mu}{\mu-\gamma+1}{-1}\\
&\qquad=\frac{\Gamma(\gamma) (-2\i)^{-\gamma}}{\Gamma(\lambda)\Gamma(\mu)} \Gamma(\mu+\lambda-\gamma) B\left( \mu, 1-\gamma \right)  2^{-\mu} \pFq{2}{1}{1-\lambda \quad\mu}{\mu -\gamma + 1 }{\frac{1}{2}}
\end{align*}
and Lemma \ref{lem:app} follows by summing both terms.
\end{proof}

\textbf{Acknowledgments.} We wish to thank the two referees for their careful reading and for their many suggestions, which helped improve the content of the paper and simplify some of the proofs, in particular the last part of Corollary \ref{cor:1}.

\addcontentsline{toc}{section}{References}

\end{document}